\documentclass[journal]{new-aiaa}
\usepackage[utf8]{inputenc}
\usepackage{textcomp}
\usepackage{graphicx}
\usepackage{placeins}
\usepackage{amsmath,bm,mathtools}
\usepackage[version=4]{mhchem}
\usepackage{siunitx}
\usepackage{longtable,tabularx,booktabs,array,multirow}
\usepackage{xcolor}
\usepackage{etoolbox}
\usepackage{tikz}
\usepackage{enumitem}
\usetikzlibrary{arrows.meta,positioning,fit,shapes.geometric}
\hypersetup{hidelinks}
\AtBeginEnvironment{matrix}{\renewcommand{\arraystretch}{1.0}}
\AtBeginEnvironment{pmatrix}{\renewcommand{\arraystretch}{1.0}}
\AtBeginEnvironment{bmatrix}{\renewcommand{\arraystretch}{1.0}}
\AtBeginEnvironment{Bmatrix}{\renewcommand{\arraystretch}{1.0}}
\AtBeginEnvironment{vmatrix}{\renewcommand{\arraystretch}{1.0}}
\AtBeginEnvironment{Vmatrix}{\renewcommand{\arraystretch}{1.0}}

\newcommand{\tr}{\operatorname{tr}}
\newcommand{\diag}{\operatorname{diag}}
\newcommand{\col}{\operatorname{col}}

\newcommand{\E}{\mathbb{E}}
\newcommand{\R}{\mathbb{R}}

\newcommand{\Phix}{\Phi^{x}}
\newcommand{\Phiu}{\Phi^{u}}

\newtheorem{theorem}{Theorem}

\newtheorem{corollary}{Corollary}
\newtheorem{remark}{Remark}
\newenvironment{proof}[1][Proof]{\par\noindent\textit{#1.}\ }{\hfill$\square$\par}

\title{Fast Stochastic Post-Fault Replanning for Launch Vehicles under Persistent Parametric Uncertainty}

\author[1]{Dai Shen\footnote{School of Astronautics. Email: shendaistars@buaa.edu.cn.}}
\author[1,2]{Chen Yang\footnote{Associate Professor, School of Astronautics. Email: cyang@buaa.edu.cn.}}
\author[1]{Ziqi Xu\footnote{School of Astronautics. Email: asxzq000617@163.com.}}
\author[1,3]{Lin Cheng\footnote{Associate Professor, School of Astronautics. Email: chenglin5580@buaa.edu.cn.}}
\author[4]{Hongbo Zhang\footnote{Professor, College of Aerospace Science and Engineering. Email: zhanghb1304@nudt.edu.cn.}}
\author[5]{Hongtao Zheng\footnote{Researcher. Email: 21583207@qq.com.}}
\author[1,2]{Shengping Gong\footnote{Professor, School of Astronautics. Corresponding author: gongsp@buaa.edu.cn.}}

\affil[1]{Beihang University, Beijing 102206, China}
\affil[2]{State Key Laboratory of High-Efficiency Reusable Aerospace Transportation Technology, Beijing 102206, China}
\affil[3]{Beijing Key Laboratory of Reusable Space Vehicle System Design, Beijing 102206, China}
\affil[4]{National University of Defense Technology, Changsha, Hunan 410072, China}
\affil[5]{Changsha Aerospace Technology Innovation Institute, Changsha, Hunan 410000, China}

\begin{document}
\maketitle

\begin{abstract}

Post-fault ascent replanning for launch vehicles requires rapid joint trajectory–guidance replanning under uncertainty to maintain closed-loop constraint satisfaction. In particular, propulsion performance cannot be promptly and fully identified after a fault, leaving persistent parametric uncertainty throughout the remaining flight. A fast stochastic ascent replanning method is developed to address this problem. To efficiently account for the cross-time correlation induced by persistent parametric uncertainty, a fast system level synthesis formulation is developed with a Riccati-based solution structure. The resulting feedback synthesis is complemented by a task-oriented terminal weighting method that shapes terminal covariance according to post-injection correction demand. These components are integrated with nominal trajectory replanning, chance-constraint tightening, and online feedback realization to establish a real-time joint stochastic trajectory–guidance replanning framework for launch vehicles. Nonlinear Monte Carlo simulations show that the proposed method effectively suppresses the influence of persistent parametric uncertainty and maintains stable convergence and high injection accuracy across parameter sweeps, with subsecond solution times. Compared with conventional ascent guidance, the empirical constraint-violation rate is substantially reduced.
\end{abstract}

\clearpage

\section*{Nomenclature}
\begin{longtable}{@{}p{0.29\textwidth}p{0.67\textwidth}@{}}
$A_k,B_k^\pm,D_k$ & FOH-discrete physical perturbation matrices \\
$\widetilde A_k,\widetilde B_k,\widetilde D_k$ & Augmented FOH matrices with energy-based nodes \\
$B_{q_{\mathrm e}},G_{q_{\mathrm e}}$ & RTN-impulse-to-MEE and Cartesian-state-to-MEE Jacobians \\
$\bm C_\alpha,C_{k,j}$ & Cholesky factor of $\bm M_\alpha$ and its $(k,j)$ block \\
$d_k,\bm d$ & Abstract additive disturbance and its full-horizon stack \\
$H_{\mathrm{orb}}$ & Stacked candidate-impulse map in phase-free MEE \\
$J_{\mathrm{corr},1}^\star,J_{\mathrm{corr},2}^\star$ & Minimum post-injection total impulse magnitude and correction energy \\
$\bm K$ & Causal augmented-state history-feedback operator \\
$L_b^{(i)},L_\eta^{(i)}$ & Persistent and interval-wise uncertainty-loading matrices \\
$L_{\mathrm{imp}},W_{\mathrm{imp}}$ & Length and width of the rectangular impact region \\
$\bm M_{\mathrm{phys}},\bm M_\alpha$ & Physical and regularized full-horizon disturbance covariances \\
$N,n_b$ & Number of discrete propagation steps and persistent-coordinate dimension \\
$P_0,P_{\chi,k},P_{\zeta_r}$ & Initial physical-state, augmented-state, and constraint-variable covariances \\
$Q_{\mathrm c},R_{\mathrm c}$ & Continuous-time state and control weights \\
$Q_{\mathrm f},\widetilde Q_{\mathrm f}$ & Terminal penalty matrix and augmented terminal weight \\
$Q_{\mathrm f,x}^{\mathrm{orb}},\lambda_{\mathrm f}$ & Task-oriented terminal matrix and its positive scale factor \\
$q_{\mathrm e}$ & Five-dimensional phase-free modified equinoctial element vector \\
$q_{\mathrm h},q_{\mathrm d}$ & Heat flux and dynamic pressure \\
$r,v$ & ECI position and velocity vectors \\
$u=[\phi\ \psi]^{\mathsf T}$ & Launch-inertial pitch and yaw thrust-direction command \\
$W_k,\bm W$ & Discrete stage and full-horizon quadratic weights \\
$x=[r^{\mathsf T}\ v^{\mathsf T}\ m]^{\mathsf T}$ & Seven-dimensional physical ascent state \\
$\alpha$ & Disturbance-covariance regularization parameter \\
$\beta_r,\pi_r,\kappa_r$ & Chance-constraint backoff, violation probability, and normal quantile \\
$\Delta v,\Delta V$ & Equivalent post-injection correction velocity and stacked impulses \\
$\varepsilon,\delta t_k$ & Specific mechanical energy and energy-node event-time deviation \\
$\mathcal F_{\mathrm N}$ & Standard normal cumulative distribution function \\
$\chi_k,\nu_k$ & Augmented perturbation state and FOH increment input \\
$\xi_b,\xi_k$ & Shared persistent and interval-wise independent standardized coordinates \\
$\bm\Phi^x,\bm\Phi^u$ & Full-horizon state and input system responses \\
$\Psi_{k,j}^x$ & State response factor on standardized random coordinates \\
$\widehat d_k$ & Reconstructed disturbance innovation \\
\end{longtable}
\addtocounter{table}{-1}

\section{Introduction}
\lettrine{N}{onfatal} propulsion faults in launch vehicles, such as partial thrust loss, do not necessarily terminate a launch mission if sufficient performance remains. However, achieving the nominal target orbit or a degraded rescue orbit requires rapid onboard trajectory replanning and guidance law updates \cite{Miao2022}. Compared with nominal ascent planning, post-fault replanning is significantly more challenging due to strict real-time requirements and pronounced propulsion uncertainties. Navigation systems accurately measure the vehicle's kinematic states, but thrust and specific impulse must be estimated indirectly from engine telemetry and vehicle dynamics. Constrained by fault transients and the limited remaining burn time, online identification cannot instantaneously and fully capture the actual performance degradation \cite{Wu2026Liquidpropellant,Yu2026ThrustDiag,Zhang2022ThrustDiag}. Consequently, residual errors in the reconstructed propulsion model persist throughout the remaining flight and act as persistent parametric uncertainty that accumulates over time \cite{Giordano2024Characterization}. Such uncertainty can induce significant trajectory deviations and constraint violations. Reliable post-fault replanning must therefore not only rapidly generate a feasible reference trajectory, but also guarantee closed-loop constraint satisfaction and orbital injection accuracy under residual propulsion uncertainties.

Most existing real-time post-fault replanning methods rely on deterministic convex optimization to recompute a nominal trajectory \cite{Miao2022,Miao2023Aux,Ma2024FaultGuidance}. Although these methods guarantee that the nominal solution satisfies path constraints, they cannot directly quantify the risk of constraint violations for the actual closed-loop trajectory under uncertainty. In practice, constraint satisfaction typically relies on empirically preset margins, which inevitably introduces excessive conservatism. Alternatively, stochastic trajectory optimization offers a more systematic approach by explicitly propagating state and control uncertainties, enforcing chance constraints to bound violation risks, and co-optimizing the nominal trajectory with closed-loop feedback \cite{Marmo2025Chance}.

In aerospace stochastic optimal control, linear covariance propagation is widely adopted due to its high computational efficiency and compatibility with convex optimization \cite{Geller2006}. Existing linear-covariance-based methods generally follow three representative computational strategies. The first parameterizes the causal feedback policy over the entire prediction horizon through a block-lower-triangular feedback map, and formulates covariance steering as a convex optimization problem \cite{Okamoto2018Cov,Ridderhof2022ChanceConstrained,Li2025Convex,Zhang2025Secondorder}. The second treats nodal covariance matrices as decision variables and convexifies the covariance dynamics through variable transformations and linear matrix inequality relaxations, thereby enabling the joint optimization of the nominal trajectory and covariance \cite{Benedikter2022Cov,Liu2025CovSteering,Kumagai2025Robust}. The third adopts an alternating optimization framework that decouples nominal trajectory planning and feedback synthesis into two smaller subproblems \cite{Messerer2021SIRO,Leeman2024,Shen2026IAC,Messerer2026RiccatiZORO}.  While the first two categories have been extensively investigated in aerospace applications, their real-time implementation remains bottlenecked by high dimensionality. Conversely, by avoiding large-scale joint optimization, the third category substantially reduces the computational burden and holds great promise for online post-fault replanning.

Despite the computational advances above, the unique disturbance mechanisms and terminal covariance requirements of post-fault launch vehicle ascent pose two major challenges to stochastic trajectory optimization. One challenge is fast feedback synthesis under persistent parametric uncertainty. Although residual propulsion model errors induced by incomplete identification significantly impact the flight trajectory, existing Riccati-based fast feedback methods typically assume white process noise \cite{Messerer2021SIRO,Messerer2026RiccatiZORO,Leeman2024}. 

Methods for non-white disturbances generally require complex optimization \cite{Knaup2023Efficient}. State augmentation restores the Markov property at the expense of requiring high-accuracy real-time thrust identification\cite{Ridderhof2022Entry}, but such identification is difficult within the limited time available after a fault.  Consequently, directly characterizing the closed-loop system response to persistent parametric uncertainty across the entire horizon while preserving a fast feedback structure remains highly challenging without high-precision online identification. Another challenge concerns the evaluation and optimization of the desired terminal covariance structure. Existing stochastic planning methods still rely heavily on empirical tuning to regulate terminal uncertainty, making it difficult to mechanistically determine which covariance structure benefits mission performance. An improper terminal covariance shape may hinder iterative convergence, induce overly aggressive feedback gains, or even cause closed-loop trajectory divergence. Therefore, eliminating this reliance on manual tuning to systematically obtain a well-behaved terminal covariance distribution in closed-loop design remains to be resolved.

In summary, existing deterministic replanning and guidance methods suffer from excessive constraint conservatism due to manually preset margins, whereas stochastic planning approaches remain bottlenecked by computational efficiency, the ability to handle persistent parametric uncertainty, and terminal covariance shaping strategies. To overcome these limitations, this work focuses on rapid joint stochastic trajectory--guidance replanning to simultaneously generate a feasible nominal trajectory while explicitly regulating closed-loop constraint violation risk and terminal injection dispersion under disturbances. The main contributions are threefold. First, a fast stochastic feedback synthesis method capable of handling persistent parametric uncertainty is proposed. Building on system level synthesis (SLS) \cite{Anderson2019,Chen2024RobustSLS,Leeman2025Robust,Bartos2026Stochastic}, we develop a fast feedback synthesis formulation that explicitly preserves the cross-time correlation structure. The resulting synthesis problem is decomposed into multiple column-wise affine recursions sharing a single common Riccati recursion, thereby enabling highly real-time covariance shaping under persistent parametric uncertainty. Second, a task-oriented terminal covariance metric and weighting construction method is developed. By mapping terminal state errors to the equivalent corrective impulse required for subsequent orbital correction, an analytical task-oriented terminal weight is derived. This enables the feedback synthesis to purposefully shape terminal uncertainty according to the post-injection correction demand, improving both iterative convergence and terminal injection accuracy. Finally, a real-time optimization framework is established and validated for joint stochastic trajectory–guidance replanning. Nominal trajectory planning and feedback synthesis are alternated through uncertainty propagation and chance-constraint tightening. The proposed SLS formulation for persistent parametric uncertainty and task-oriented terminal weighting are incorporated into the feedback synthesis. Innovation-based response realization and energy-based control interpolation are used to enable efficient online implementation and reduce sensitivity to burn-time variation, respectively. Nonlinear Monte Carlo simulations under heterogeneous uncertainties demonstrate subsecond solution times, low constraint violation rates, and high orbital injection accuracy.

The remainder of the paper is organized as follows. Sec.~II formulates the nominal ascent trajectory optimization problem, defines the uncertainty model, and introduces the local chance-constraint approximation. Sec.~III develops the fast SLS formulation for persistent parametric uncertainty. Sec.~IV establishes the task-oriented stochastic replanning framework, including the post-injection correction metric, task-oriented terminal weight, alternating stochastic replanning procedure, and online feedback realization. Sec.~V presents the numerical experiments and nonlinear Monte Carlo results, followed by the conclusions in Sec.~VI.

\section{Stochastic Ascent Trajectory Replanning Problem}\label{sec:problem}

This section formulates the post-fault ascent trajectory
replanning problem for launch vehicles. Sec.~\ref{sec:ascent_dynamics} describes the nominal ascent dynamics. Sec.~\ref{sec:nominal_replanning} introduces the chance constraints and the corresponding nominal replanning problem. Sec.~\ref{sec:uncertainty_model} specifies the uncertainty model used in the optimization.

\subsection{Nominal Ascent Dynamics}\label{sec:ascent_dynamics}
The Earth-centered inertial (ECI) frame
$\mathrm{X}_{\mathrm{I}}$-$\mathrm{Y}_{\mathrm{I}}$-$\mathrm{Z}_{\mathrm{I}}$
and the launch inertial frame
$\mathrm{X}_{\mathrm{L}}$-$\mathrm{Y}_{\mathrm{L}}$-$\mathrm{Z}_{\mathrm{L}}$
used in this paper are shown in Fig.~\ref{fig:coordinates}.
The $\mathrm{Y}_{\mathrm{L}}$ axis is directed radially outward from
the Earth at the launch site. The $\mathrm{X}_{\mathrm{L}}$ axis lies
in the local horizontal plane and points along the prescribed launch
direction, while the $\mathrm{Z}_{\mathrm{L}}$ axis follows the
right-handed convention.

After a propulsion system fault has been diagnosed, the deterministic
replanner uses the identified mean post-fault propulsion capability.
The physical state and thrust-direction command are
$x=[{r}^{\mathsf T}\ {v}^{\mathsf T}\ m]^{\mathsf T}$ and
$u=[\phi\ \psi]^{\mathsf T}$, where ${r}$ and ${v}$ are the
position and velocity expressed in the ECI frame, $m$ is vehicle
mass, and $\phi$ and $\psi$ are the commanded pitch and yaw angles,
respectively, defined in the launch inertial frame. The mean post-fault dynamics in phase $i$ are
\begin{subequations}\label{eq:nominal_dynamics}
\begin{align}
 \dot{{r}}
 &={v},\\
 \dot{{v}}
 &=-\frac{\mu{r}}{\|{r}\|^3}
 -\frac{\rho(h)\bar C_{\mathrm{D},i}S}{2m}
   \|{v}_{\mathrm{rel}}\|{v}_{\mathrm{rel}}
 +\frac{\bar T_i(p_{\mathrm a}(h))}{m}{e}_{T}(u),\\
 \dot m
 &=\dot{\bar m}_i,
\end{align}
\end{subequations}
where $h=\|{r}\|-R_{\mathrm{E}}$ is the altitude,
${v}_{\mathrm{rel}}={v}-{\omega}_{\mathrm{E}}\times{r}$
is the atmosphere-relative velocity, and $\mu$, $R_{\mathrm{E}}$,
and ${\omega}_{\mathrm{E}}$ denote the Earth's gravitational
parameter, radius, and angular velocity vector, respectively.
The thrust direction is
${e}_T(u)=R_{\mathrm{IL}}
[\cos\phi\cos\psi,\ \sin\phi\cos\psi,\ -\sin\psi]^{\mathsf T}$,
where $R_{\mathrm{IL}}$ is the constant rotation matrix mapping vector
components from the launch inertial frame to the ECI frame.
Both atmospheric density $\rho(h)$ and ambient pressure $p_{\mathrm a}(h)$
are obtained from the 1976 U.S. Standard Atmosphere (US76) model.
Here, $S$ is the reference area and $\bar C_{\mathrm{D},i}$ is the
mean drag coefficient in phase $i$. The identified mean vacuum thrust
$\bar T_{i,\mathrm{vac}}$ and vacuum specific impulse
$\bar I_{\mathrm{sp},i}$ are treated as constant
propulsion parameters. The thrust and mass-flow rate are modeled as
$\bar T_i(p_{\mathrm a}(h))=\bar T_{i,\mathrm{vac}}-p_{\mathrm a}(h)A_{\mathrm{e},i}$ and
$\dot{\bar m}_i=-\bar T_{i,\mathrm{vac}}/
(\bar I_{\mathrm{sp},i}g_0)$, respectively, where
$A_{\mathrm{e},i}$ is the nozzle exit area, and $g_0$ is the
standard gravitational acceleration.

The boundary conditions and path constraints are summarized as
\begin{subequations}
\begin{gather}
x(t_0)=x_0,\qquad
\psi_{\mathrm{f}}(x(t_{\mathrm{f}}))=0,\qquad
\label{eq:boundary_constraints}\\
g_\mathrm{b}(x_{\mathrm s},t_{\mathrm s})\le0,\quad
q_{\mathrm h}(t)\le q_{\mathrm h,\max},\quad
q_{\mathrm d}(t)\le q_{\mathrm d,\max},\quad
|\dot u(t)|\le\dot u_{\max},\quad
m(t)\ge m_{\min},\quad
t\in[t_0,t_{\mathrm{f}}],
\label{eq:path_constraints}
\end{gather}
\end{subequations}
where $t_0$ and $t_{\mathrm{f}}$ are the initial and terminal times,
respectively, and $\psi_{\mathrm{f}}$ specifies the terminal orbital
conditions following Ref.~\cite{Miao2022}. Here,
$q_{\mathrm d}=\frac{1}{2}\rho(h)\|{v}_{\mathrm{rel}}\|^2$ is the dynamic
pressure, and heat flux $q_{\mathrm h}$ is estimated using the Sutton--Graves and
free molecular flow correlations in the continuum and rarefied
regimes, respectively, connected by the bridging relation in
Ref.~\cite{Matting1971Approximate}. The limits $q_{\mathrm d,\max}$ and
$q_{\mathrm h,\max}$ bound the dynamic pressure and heat flux, while
$\dot u_{\max}$ and $m_{\min}$ denote the maximum thrust direction
rate and minimum vehicle mass, respectively. The constraint $g_\mathrm{b}$
enforces impact-location limits through return propagation from
the separation state $x_{\mathrm s}$ at time $t_{\mathrm s}$, following
Refs.~\cite{Benedikter2021Autonomous,Benedikter2022Stochastic},
with a downrange-aligned rectangular impact region and the same
atmospheric density model and disturbance configuration as the
ascent phase.

\begin{figure}[t]
\centering
\includegraphics[width=0.45\columnwidth]{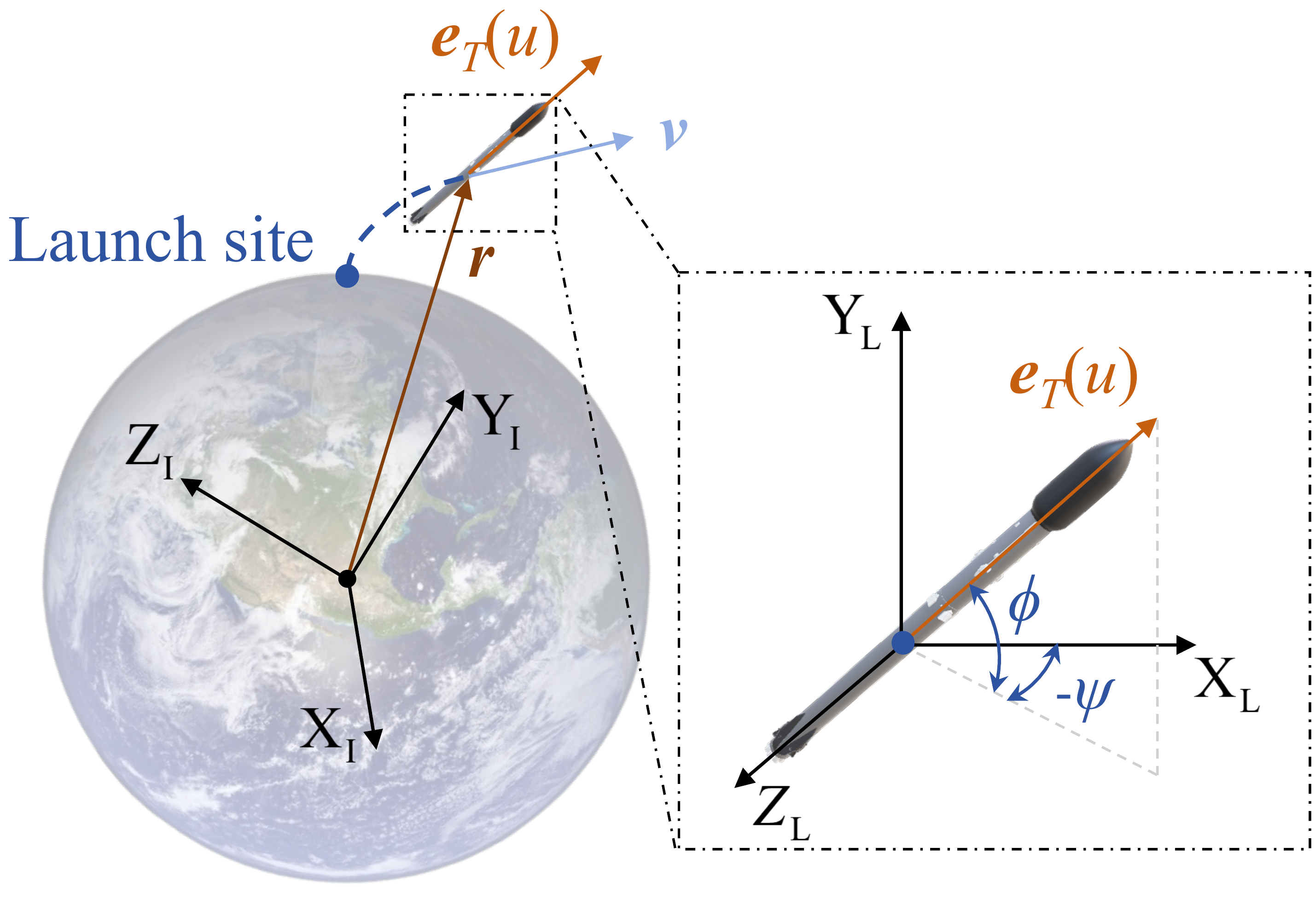}
\caption{Coordinate frames and angle definitions used in the ascent trajectory replanning model.}
\label{fig:coordinates}
\end{figure}

\subsection{Chance-Constrained Nominal Replanning}\label{sec:nominal_replanning}

For each scalar constraint to be enforced probabilistically, let \(g_r(\zeta_r)\le0\), where \(\zeta_r\) collects the physical quantities on which the \(r\)th constraint depends. The corresponding marginal chance constraint is \(\Pr\{g_r(\zeta_r)\le0\}\ge1-\pi_r\), where \(\pi_r\) is the allowable violation probability. Linearizing about the nominal trajectory gives \(G_r=\nabla_{\zeta_r}g_r(\bar\zeta_r)^{\mathsf T}\), where \(\bar{\zeta}_r\) denotes the nominal value of \(\zeta_r\). Under the local Gaussian approximation, the constraint is tightened as
\begin{equation}
g_r(\bar\zeta_r)+\beta_r\le0,\qquad
\beta_r=\kappa_r\sqrt{G_rP_{\zeta_r}G_r^{\mathsf T}},
\label{eq:chance_backoff}
\end{equation}
where \(\kappa_r=\mathcal F_{\mathrm N}^{-1}(1-\pi_r)\), \(\mathcal F_{\mathrm N}\) is the standard normal cumulative distribution function, and \(P_{\zeta_r}\) is the covariance of the corresponding physical variables supplied by the feedback synthesis in Sec.~\ref{sec:sls}.

For a given set of chance backoffs, the nominal replanning problem is
\begin{equation}
\begin{aligned}
\mathcal P_1:\qquad
\min_{x(\cdot),u(\cdot)}
&J_{\mathrm{traj}}[x(\cdot),u(\cdot)]\\
\text{s.t.}\quad
&\text{Eq.~\eqref{eq:nominal_dynamics}, \eqref{eq:boundary_constraints} and \eqref{eq:chance_backoff}.}
\end{aligned}
\label{eq:nominal_replan}
\end{equation}
where the trajectory objective $J_{\mathrm{traj}}$ follows the design in Ref.~\cite{Miao2022}. The resulting nonlinear optimal control problem is solved using sequential convex programming (SCP) \cite{Malyuta2022Tutorial,Pavanello2025CollisionAvoidance,Malyuta2021Advances}.

\subsection{Uncertainty Model}\label{sec:uncertainty_model}

Let $N$ denote the number of discrete propagation steps. The state and control values are defined at nodes \(k=0,\ldots,N\), while the propagation intervals are indexed by \(k=0,\ldots,N-1\). For a powered interval $k$ in phase $i$, the uncertain parameters are modeled
as
\begin{equation}
\begin{aligned}
T_k &= \bar T_k(1+b_{\mathrm T}^{(i)}+\eta_{\mathrm T,k}), \qquad
\dot m_k = \dot{\bar m}_i(1+b_{\dot m}^{(i)}+\eta_{\dot m,k}),\\
C_{\mathrm D,k} &= \bar C_{\mathrm D,i}(1+b_{C_{\mathrm D}}^{(i)}+\eta_{C_{\mathrm D},k}), \qquad
\rho_k = \bar\rho_k(1+b_\rho^{(i)}+\eta_{\rho,k}),
\end{aligned}
\label{eq:uncertain_parameters}
\end{equation}
where $b^{(i)}$ and $\eta_k$ denote the phase-wise persistent relative
biases and interval-wise relative fluctuations, respectively. Collect the relative errors into
\begin{equation}
\omega_k=\operatorname{col}\big(
b_{\mathrm T}^{(i)}+\eta_{\mathrm T,k},\
b_{\dot m}^{(i)}+\eta_{\dot m,k},\
b_{C_{\mathrm D}}^{(i)}+\eta_{C_{\mathrm D},k},\
b_\rho^{(i)}+\eta_{\rho,k}
\big)=L_{b}^{(i)}\xi_b+L_{\eta}^{(i)}\xi_k.
\label{eq:uncertainty_vector}
\end{equation}
where $\xi_b\sim\mathcal N(0,I_{n_b})$ and
$\xi_k\sim\mathcal N(0,I_4)$ are independent. Here, $\xi_{b}$ contains the persistent standardized coordinates shared across all applicable intervals, whereas $\xi_k$ is independently
redrawn at each interval. The phase-dependent matrix
$L_{b}^{(i)}\in\mathbb R^{4\times n_{b}}$ maps the global
persistent coordinates to the physical uncertainty vector active in
phase $i$, while $L_\eta^{(i)}\in\mathbb R^{4\times4}$ maps the
interval-wise independent uncertainty.

For either source class $\circ\in\{b,\eta\}$, let the covariance
matrix satisfy
\begin{equation}
\begin{aligned}
\Sigma_\circ^{(i)}
=L_\circ^{(i)}(L_\circ^{(i)})^{\mathsf T}
=\operatorname{blkdiag}\left(
\begin{bmatrix}
(\sigma_{\circ,T}^{(i)})^2 &
\varrho_{\circ,T\dot m}^{(i)}
\sigma_{\circ,T}^{(i)}\sigma_{\circ,\dot m}^{(i)}\\
\varrho_{\circ,T\dot m}^{(i)}
\sigma_{\circ,T}^{(i)}\sigma_{\circ,\dot m}^{(i)} &
(\sigma_{\circ,\dot m}^{(i)})^2
\end{bmatrix},
(\sigma_{\circ,C_{\mathrm D}}^{(i)})^2,
(\sigma_{\circ,\rho}^{(i)})^2
\right),
\end{aligned}
\label{eq:source_covariance}
\end{equation}
where $\sigma_{\circ,\cdot}^{(i)}$ denotes the standard deviation of
the corresponding relative error, and
$\varrho_{\circ,T\dot m}^{(i)}$ is the correlation coefficient between
the thrust and mass flow rate relative errors.

The state error at replanning activation is modeled as
$\delta x_0\sim\mathcal N(0,P_0)$
and is independent of the propulsion and aerodynamic uncertainty sources. The initial uncertainty can also account for a finite replanning delay between fault diagnosis and activation of the replanned trajectory. 

To improve numerical conditioning, all equations and optimization formulations in this paper are globally nondimensionalized using characteristic length, time, and mass scales, consistent with the scaling practice in Ref.~\cite{Pilipovsky2025FDOC}. 

\section{Fast System Level Synthesis for Persistent Parametric Uncertainty}\label{sec:sls}

For a fixed nominal trajectory from $\mathcal P_1$, this section develops a fast feedback synthesis method that accounts for persistent parametric uncertainty and provides the covariance required by Eq.~\eqref{eq:chance_backoff}. Sec.~\ref{sec:foh_feedback} develops the FOH perturbation model with energy-based nodes and formulates the quadratic feedback objective. Sec.~\ref{sec:slp} introduces the finite-horizon system level parameterization, and Sec.~\ref{sec:persistent} develops the persistent-correlation response optimization and its Riccati-based solution. Sec.~\ref{sec:parallel} analyzes the computational complexity and parallel structure.

\subsection{FOH Model with Energy-Based Nodes and Feedback Objective}\label{sec:foh_feedback}

Define the specific mechanical energy as $\varepsilon(x)=\|{v}\|^2/2-\mu/\|{r}\|$. Given the nominal trajectory $\{(\bar x_k,\bar u_k)\}_{k=0}^{N}$, we use the energy levels $\varepsilon_k=\varepsilon(\bar x_k)$ at the nominal nodes to define the boundaries of the feedback intervals. Under uncertainties, the perturbed trajectory generally reaches these energy levels at times different from the corresponding nominal times, allowing the interval durations to adapt to variations in energy accumulation.

Let $t_k$ denote the nominal time at which the trajectory reaches the $k$th energy node, and define the nominal interval duration as $\tau_k=t_{k+1}-t_k$. Under a first-order hold (FOH) parameterization of the control input over interval $k$, let $x_{k+1}=F_k(x_k,u_k,u_{k+1},\tau_k,\omega_k)$ denote the nonlinear state-transition function. Let $\delta t_k$ denote the deviation of the actual event time from $t_k$, so that the interval-duration deviation is $\delta\tau_k=\delta t_{k+1}-\delta t_k$. Following the standard FOH discretization in Refs.~\cite{Malyuta2021Advances,Malyuta2022Tutorial}, first-order variational linearization about the nominal trajectory yields
\begin{equation}
\delta x_{k+1}
=\delta\hat x_{k+1}^{-}+c_k\delta\tau_k,\qquad
\delta\hat x_{k+1}^{-}
=A_k\delta x_k+B_k^-\delta u_k
+B_k^+\delta u_{k+1}+D_k\omega_k,
\label{eq:foh_physical}
\end{equation}
where $\delta\hat x_{k+1}^{-}$ is the predicted state perturbation over the fixed nominal duration $\tau_k$, prior to event-time correction, and $c_k=\partial F_k/\partial\tau_k$ is the interval-duration sensitivity evaluated along the nominal trajectory.

Define the augmented perturbation state as $\chi_k=[\delta x_k^{\mathsf T}\ \delta u_k^{\mathsf T}\ \delta t_k]^{\mathsf T}\in\mathbb R^{10}$ and the FOH increment input as $\nu_k=\delta u_{k+1}-\delta u_k\in\mathbb R^2$. The initial augmented covariance is factored as $P_{\chi,0}=L_{-1}L_{-1}^{\mathsf T}$, with $\chi_0=L_{-1}\xi_{-1}$ and $\xi_{-1}\sim\mathcal N(0,I_7)$, where $L_{-1}\in\mathbb R^{10\times7}$ and the initial control and event-time deviations are zero.

Since the next node must lie on the prescribed energy surface, its state perturbation satisfies $n_{\varepsilon,k+1}^{\mathsf T}\delta x_{k+1}=0$, where $n_{\varepsilon,k+1}=\nabla_x\varepsilon(\bar x_{k+1})$. Substituting Eq.~\eqref{eq:foh_physical} into this condition gives the interval-duration correction $\delta\tau_k=-n_{\varepsilon,k+1}^{\mathsf T}\delta\hat x_{k+1}^{-} / n_{\varepsilon,k+1}^{\mathsf T}c_k$. Because the specific mechanical energy increases strictly along the nominal ascent trajectory, the energy surfaces are crossed transversely, ensuring that $n_{\varepsilon,k+1}^{\mathsf T}c_k\ne0$. This first-order event-time correction follows the standard treatment of event-driven systems \cite{Kong2024Saltation}.

Before event-time correction, the augmented perturbation $\hat\chi_{k+1}^{-}=[(\delta\hat x_{k+1}^{-})^{\mathsf T}\ \delta u_{k+1}^{\mathsf T}\ \delta t_k]^{\mathsf T}$ satisfies $\hat\chi_{k+1}^{-}=\hat A_k\chi_k+\hat B_k\nu_k+\hat D_k\omega_k$, where
\begin{equation}
\hat A_k=
\begin{bmatrix}
A_k & B_k^-+B_k^+ & 0\\
0 & I_2 & 0\\
0 & 0 & 1
\end{bmatrix},\qquad
\hat B_k=
\begin{bmatrix}
B_k^+\\ I_2\\ 0
\end{bmatrix},\qquad
\hat D_k=
\begin{bmatrix}
D_k\\ 0\\ 0
\end{bmatrix}.
\label{eq:pre_event_matrices}
\end{equation}
Define $\hat c_k=\operatorname{col}(c_k,0,0,1)$ and $\hat n_{\varepsilon,k+1}=\operatorname{col}(n_{\varepsilon,k+1},0,0,0)$. The event-time correction can then be incorporated through the projection matrix $\mathcal T_k^\varepsilon$ and the resulting system matrices
\begin{equation}
\mathcal T_k^\varepsilon=I_{10}-\frac{\hat c_k\hat n_{\varepsilon,k+1}^{\mathsf T}}{\hat n_{\varepsilon,k+1}^{\mathsf T}\hat c_k},
\qquad
\chi_{k+1}=\mathcal T_k^\varepsilon\hat\chi_{k+1}^{-},
\qquad
(\widetilde A_k,\widetilde B_k,\widetilde D_k)
=\mathcal T_k^\varepsilon(\hat A_k,\hat B_k,\hat D_k).
\label{eq:event_projection}
\end{equation}
This projection simultaneously projects the physical-state perturbation onto the tangent space of the prescribed energy surface and updates the event-time deviation via $\delta t_{k+1}=\delta t_k+\delta\tau_k$. The resulting dynamics with energy-based nodes are
\begin{equation}
\chi_{k+1}
=\widetilde A_k\chi_k+\widetilde B_k\nu_k+\widetilde D_k\omega_k
=\widetilde A_k\chi_k+\widetilde B_k\nu_k
+\widetilde D_kL_{b,k}\xi_{b}
+\widetilde D_kL_{\eta,k}\xi_k.
\label{eq:augmented}
\end{equation}
where the phase-dependent uncertainty-loading matrices $L_{b}^{(i)}$ and $L_\eta^{(i)}$ are denoted by $L_{b,k}$ and $L_{\eta,k}$ according to the active phase since each global interval $k$ belongs to a unique phase. The persistent standardized coordinates $\xi_{b}$ are shared across all applicable intervals, whereas the independent coordinates $\xi_k$ are independently redrawn for each interval.

The local feedback objective is
\begin{equation}
J_{\mathrm c}=
\mathbb E\!\left[
\delta x_{\mathrm f}^{\mathsf T}Q_{\mathrm f}\delta x_{\mathrm f}
+\int_{t_0}^{t_{\mathrm f}}
\left(
\delta x^{\mathsf T}Q_{\mathrm c}\delta x+
\delta u^{\mathsf T}R_{\mathrm c}\delta u
\right)\,dt
\right],
\label{eq:continuous_cost}
\end{equation}
where $Q_{\mathrm c}\succeq0$ and $R_{\mathrm c}\succ0$ are the continuous-time state and control weights, respectively, and $Q_{\mathrm f}\succeq0$ is the terminal penalty matrix. The running cost penalizes event-aligned state and control deviations, without a separate penalty on the event-time deviation. The stage matrices are constructed using left-endpoint quadrature for the state term and exact integration of the FOH control-deviation term over each nominal interval. Since the running-cost density is second order and the interval-duration deviation is first order, their product is third order and is omitted from the local quadratic objective. The augmented terminal weight is $\widetilde Q_{\mathrm f}=\operatorname{blkdiag}(Q_{\mathrm f},0_{3\times3})\succeq0$, which penalizes the physical terminal-state deviation without directly penalizing the control or event-time deviation. The discrete feedback objective is therefore\
\begin{equation}
J_{\mathrm{phys}}
=\mathbb E\!\left[
\chi_N^{\mathsf T}\widetilde Q_{\mathrm f}\chi_N
+\sum_{k=0}^{N-1}
[\chi_k^{\mathsf T}\ \nu_k^{\mathsf T}]
W_k
[\chi_k^{\mathsf T}\ \nu_k^{\mathsf T}]^{\mathsf T}
\right],\qquad
W_k=
\begin{bmatrix}
Q_k&S_k\\
S_k^{\mathsf T}&R_k
\end{bmatrix}.
\label{eq:discrete_cost}
\end{equation}

\subsection{System Level Parameterization}\label{sec:slp}

Introduce the abstract disturbance $d_k=\widetilde D_k\omega_k$ and $d_{-1}=\chi_0$, so that $\chi_{k+1}=\widetilde A_k\chi_k+\widetilde B_k\nu_k+d_k$. Let $n_\chi=\dim(\chi_k)$ and $n_\nu=\dim(\nu_k)$. For causal responses, let $\Phi^x_{k,j}\in\mathbb R^{n_\chi\times n_\chi}$ and $\Phi^u_{k,j}\in\mathbb R^{n_\nu\times n_\chi}$ denote the state- and input-response matrices to disturbance block $d_j$, respectively. Then
\begin{equation}\label{eq:response_sum}
\chi_k=\Phix_{k,-1}d_{-1}+\sum_{j=0}^{k-1}\Phix_{k,j}d_j,\qquad
\nu_k=\Phiu_{k,-1}d_{-1}+\sum_{j=0}^{k-1}\Phiu_{k,j}d_j.
\end{equation}

Define the stacked vectors $\bm\chi=\col(\chi_0,\ldots,\chi_N)$, $\bm\nu=\col(\nu_0,\ldots,\nu_{N-1})$, $\bm d=\col(d_{-1},d_0,\ldots,d_{N-1})$, and the full-horizon response variable
\begin{equation}
\bm z=\col(\bm\chi,\bm\nu)=\bm\Phi\bm d, \qquad \bm\Phi = [(\bm\Phi^x)^{\mathsf T}, (\bm\Phi^u)^{\mathsf T}]^{\mathsf T}.
\label{eq:full_response}
\end{equation}

The corresponding block-lower-triangular response matrices are
\begin{equation}
\bm\Phi^x=
\begin{bmatrix}
I&0&\cdots&0\\
\Phix_{1,-1}&I&\ddots&\vdots\\
\vdots&\ddots&\ddots&0\\
\Phix_{N,-1}&\cdots&\Phix_{N,N-2}&I
\end{bmatrix},\qquad
\bm\Phi^u=
\begin{bmatrix}
\Phiu_{0,-1}&0&\cdots&0\\
\Phiu_{1,-1}&\Phiu_{1,0}&\ddots&\vdots\\
\vdots&\vdots&\ddots&0\\
\Phiu_{N-1,-1}&\cdots&\Phiu_{N-1,N-2}&0
\end{bmatrix}.
\label{eq:response_matrices}
\end{equation}

Let $Z$ denote the block down-shift operator, with $\bm A=\operatorname{blkdiag}(\widetilde A_0,\ldots,\widetilde A_{N-1},0)$ and $\bm B=\operatorname{blkdiag}(\widetilde B_0,\ldots,\widetilde B_{N-1},0)$, including compatible terminal padding, and let $\mathcal Z_{AB}=\left[I-Z\bm A\quad-Z\bm B\right]$. By the finite-horizon system level parameterization of Ref.~\cite[Theorem~2.1]{Wang2019SLS}, a pair of causal response matrices is achievable if and only if $\mathcal Z_{AB}\bm\Phi=I$.
For any feasible response pair, the causal feedback law $\bm\nu=\bm K\bm\chi$ is recovered from 
\begin{equation}
 \bm K\bm\Phi^x=\bm\Phi^u,
 \qquad
 \bm K=\bm\Phi^u(\bm\Phi^x)^{-1}.
 \label{eq:feedback_response_relation}
\end{equation}
The inverse exists because $\bm\Phi^x$ is block lower triangular with identity diagonal blocks.

Substituting the definitions of \(d_j\) and \(\omega_j\) into Eq.~\eqref{eq:response_sum} gives
\begin{equation}
\chi_k=\Psi_{k,-1}^{x}\xi_{-1}
+\Psi_{k,b}^{x}\xi_{b}
+\sum_{j=0}^{k-1}\Psi_{k,j}^{x}\xi_j,
\label{eq:physical_response_state}
\end{equation}
where
\begin{equation}
\Psi_{k,-1}^{x}=\Phix_{k,-1}L_{-1}, \qquad
\Psi_{k,b}^{x}=\sum_{j=0}^{k-1}\Phix_{k,j}\widetilde D_jL_{b,j},\qquad
\Psi_{k,j}^{x}=\Phix_{k,j}\widetilde D_jL_{\eta,j}
\label{eq:physical_response_factors}
\end{equation}
Since \(\xi_{-1}\), \(\xi_b\), and \(\{\xi_j\}_{j=0}^{k-1}\) are mutually independent with identity covariance, the augmented state covariance is
\begin{equation}
P_{\chi,k}=\mathbb E[\chi_k\chi_k^{\mathsf T}]=\Psi_{k,-1}^{x}(\Psi_{k,-1}^{x})^{\mathsf T}
+\Psi_{k,b}^{x}(\Psi_{k,b}^{x})^{\mathsf T}+\sum_{j=0}^{k-1}
\Psi_{k,j}^{x}(\Psi_{k,j}^{x})^{\mathsf T}.
\label{eq:physical_covariance}
\end{equation}

\begin{remark}[Cross-time correlation of persistent parametric uncertainty]\label{rem:cross_time}
For independent disturbances across intervals, the cross terms between responses to different intervals vanish in the covariance, leaving only the sum of individual contributions. In contrast, persistent coordinates $\xi_{b}$ are shared across intervals, so the cross terms between their responses across different intervals remain in the covariance.
\end{remark}

This response-based covariance expression is equivalent to linear covariance propagation of the same closed-loop perturbation model, while explicitly retaining the cross-time correlations induced by the persistent parametric uncertainty. The constraint covariance \(P_{\zeta_r}\) in Eq.~\eqref{eq:chance_backoff} is assembled analogously from the corresponding physical responses. Let \(\xi_r\) collect the mutually independent standardized coordinates relevant to constraint \(r\), and write
\begin{equation}
\delta\zeta_r=\Psi_{\zeta,r}\xi_r,
\qquad
P_{\zeta_r}=\Psi_{\zeta,r}\Psi_{\zeta,r}^{\mathsf T}.
\label{eq:zeta_covariance}
\end{equation}
The state and control components of \(\Psi_{\zeta,r}\) are obtained from the closed-loop responses above, while direct uncertain parameter responses are expressed on the same standardized coordinates, thereby retaining their statistical dependence.

\subsection{Structured Response Optimization under Persistent Parametric Uncertainty}\label{sec:persistent}
Let $\bm W$ be the symmetric full-horizon representation of the stage
and terminal weights, defined by
\begin{equation}
    \bm z^{\mathsf T}\bm W\bm z=
    \left[
    \chi_N^{\mathsf T}\widetilde Q_{\mathrm f}\chi_N
    +\sum_{k=0}^{N-1}
    [\chi_k^{\mathsf T}\ \nu_k^{\mathsf T}]
    W_k
    [\chi_k^{\mathsf T}\ \nu_k^{\mathsf T}]^{\mathsf T}
    \right]
\end{equation}

The abstract disturbance is generated by the physical random
coordinates as $\bm d=\bm D_{\eta}\bm\xi_{\eta}
+\bm G_{b}\xi_{b}$, where
$\bm\xi_{\eta}=\operatorname{col}(\xi_{-1},\xi_0,\ldots,\xi_{N-1})$. Define 

\begin{equation}
    \bm D_{\eta}
    =\operatorname{blkdiag}\left(
    L_{-1},\widetilde D_0L_{\eta,0},\ldots,
    \widetilde D_{N-1}L_{\eta,N-1}\right),\qquad
    \bm G_{b}
    =\operatorname{col}\left(
    0,\widetilde D_0L_{b,0},\ldots,
    \widetilde D_{N-1}L_{b,N-1}\right).
\label{eq:independent_loading}
\end{equation}

Using Eq.~\eqref{eq:full_response}, the physical objective in
Eq.~\eqref{eq:discrete_cost} becomes
\begin{equation}
    J_{\mathrm{phys}}
    =\E[\bm z^{\mathsf T}\bm W\bm z]
    =\operatorname{tr}\bigl(
    \bm\Phi^{\mathsf T}\bm W\bm\Phi
    \bm M_{\mathrm{phys}}\bigr),
    \label{eq:physical_objective}
\end{equation}
where $\bm M_{\mathrm{phys}}=\E[\bm d\bm d^{\mathsf T}]
=\bm D_{\eta}\bm D_{\eta}^{\mathsf T}
+\bm G_{b}\bm G_{b}^{\mathsf T}$.

The original SLS problem is 
\begin{subequations}\label{prob:physical_synthesis}
\begin{align}
    \mathcal P_{2\mathrm{a}}:\qquad
    \min_{\bm\Phi\in\mathcal C}\quad&
    J_{\mathrm{phys}}(\bm\Phi)\\
    \mathrm{s.t.}\quad&
    \mathcal Z_{AB}\bm\Phi=I.
\end{align}
\end{subequations}
where $\mathcal C$ denotes the set of admissible causal responses. This is the feedback synthesis problem of interest, formulated in terms of the closed-loop responses. However, its original formulation is difficult to solve efficiently. We therefore construct several auxiliary problems to facilitate its solution.

By construction, $\bm M_{\mathrm{phys}}\succeq0$ is structurally singular. Because neither the initial perturbation $L_{-1}$ nor the projected disturbance matrices $\widetilde D_k=\mathcal T_k^\varepsilon\hat D_k$ directly excite the augmented control coordinates $\delta u_k$, the corresponding rows of $\bm D_\eta$ and $\bm G_{b}$ are identically zero. To ensure strict positive definiteness and enable a valid Cholesky factorization, we define the regularized covariance matrix
$
    \bm M_\alpha
    =
    \bm M_{\mathrm{phys}}
    +
    \alpha I
    \succ0
$
for $\alpha>0$. Define
\begin{equation}
    J_\alpha(\bm\Phi)
    =
    \tr\!\left(
    \bm\Phi^{\mathsf T}\bm W\bm\Phi
    \bm M_\alpha
    \right)
    =
    J_{\mathrm{phys}}(\bm\Phi)
    +
    \alpha J_{\mathrm{reg}}(\bm\Phi),
    \qquad
    J_{\mathrm{reg}}(\bm\Phi)
    =
    \tr\!\left(
    \bm\Phi^{\mathsf T}\bm W\bm\Phi
    \right).
    \label{eq:regularized_objective}
\end{equation}
The corresponding regularized auxiliary problem is
\begin{subequations}\label{prob:regularized_synthesis}
\begin{align}
    \mathcal P_{2\mathrm{b}}:\qquad
    \min_{\bm\Phi\in\mathcal C}\quad&
    J_\alpha(\bm\Phi)\\
    \mathrm{s.t.}\quad&
    \mathcal Z_{AB}\bm\Phi=I.
\end{align}
\end{subequations}

Since $\bm M_\alpha\succ0$, let
$
    \bm M_\alpha
    =
    \bm C_\alpha
    \bm C_\alpha^{\mathsf T}
$
be its Cholesky factorization and define
\begin{equation}
    \widehat{\bm\Phi}
    =
    \bm\Phi\bm C_\alpha.
    \label{eq:canonical_transform}
\end{equation}
The transformed auxiliary problem is
\begin{subequations}\label{prob:transformed_synthesis}
\begin{align}
    \mathcal P_{2\mathrm{c}}:\qquad
    \min_{\widehat{\bm\Phi}\in\mathcal C}\quad&
    \tr\!\left(
    \widehat{\bm\Phi}^{\mathsf T}
    \bm W
    \widehat{\bm\Phi}
    \right)\\
    \mathrm{s.t.}\quad&
    \mathcal Z_{AB}\widehat{\bm\Phi}
    =
    \bm C_\alpha.
\end{align}
\end{subequations}

\begin{theorem}[Fast System Level Synthesis for Persistent Parametric Uncertainty]\label{thm:structured}
Let $\bm\Phi^\star$ be an optimal solution of $\mathcal P_{2\mathrm{a}}$ and let
$\widehat{\bm\Phi}_\alpha^\star$ be an optimal solution of $\mathcal P_{2\mathrm{c}}$. Define
$\bm\Phi_\alpha^\star=\widehat{\bm\Phi}_\alpha^\star\bm C_\alpha^{-1}.$
Suppose the stage and terminal weighting matrices satisfy
\begin{equation}
    W_k\succeq0,
    \qquad
    R_k\succ0,
    \quad
    k=0,\ldots,N-1,
    \qquad
    \widetilde Q_{\mathrm{f}}\succeq0.
    \label{eq:theorem_weight_conditions}
\end{equation}
Then $\bm\Phi_\alpha^\star$ is feasible for $\mathcal P_{2\mathrm{a}}$ and satisfies
\begin{equation}
    0
    \le
    J_{\mathrm{phys}}(\bm\Phi_\alpha^\star)
    -
    J_{\mathrm{phys}}(\bm\Phi^\star)
    \le
    \alpha J_{\mathrm{reg}}(\bm\Phi^\star).
    \label{eq:regularization_bound}
\end{equation}
Moreover, $\widehat{\bm\Phi}_\alpha^\star$ can be obtained from one common Riccati backward recursion together with mutually independent affine recursions for the $N+1$ transformed disturbance columns.
\end{theorem}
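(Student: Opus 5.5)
The argument has three parts: an equivalence between $\mathcal P_{2\mathrm{b}}$ and $\mathcal P_{2\mathrm{c}}$, a sandwich bound from the decomposition in Eq.~\eqref{eq:regularized_objective}, and a column-wise dynamic-programming reading of $\mathcal P_{2\mathrm{c}}$.

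\textbf{Equivalence.} Since $\bm M_\alpha\succ0$, the Cholesky factor $\bm C_\alpha$ is lower triangular and invertible, and so is $\bm C_\alpha^{-1}$. With the block partition induced by $\bm d$, both are block lower triangular. Right-multiplying a block-lower-triangular causal response by a block-lower-triangular matrix keeps it block lower triangular. Hence $\bm\Phi\mapsto\bm\Phi\bm C_\alpha$ maps $\mathcal C$ into the admissible causal class of $\mathcal P_{2\mathrm{c}}$, and its inverse maps back. Here I would interpret $\mathcal C$ in $\mathcal P_{2\mathrm{c}}$ as block-lower-triangular structure without the identity-diagonal normalization; that normalization is in any case implied by $\mathcal Z_{AB}\bm\Phi=I$ for the $\bm\Phi^x$ part.

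The constraint transforms as $\mathcal Z_{AB}\bm\Phi=I\iff\mathcal Z_{AB}\widehat{\bm\Phi}=\bm C_\alpha$. The objective transforms as $\tr(\bm\Phi^{\mathsf T}\bm W\bm\Phi\bm C_\alpha\bm C_\alpha^{\mathsf T})=\tr(\widehat{\bm\Phi}^{\mathsf T}\bm W\widehat{\bm\Phi})$ by cyclicity of the trace. Therefore $\bm\Phi_\alpha^\star=\widehat{\bm\Phi}_\alpha^\star\bm C_\alpha^{-1}$ is optimal for $\mathcal P_{2\mathrm{b}}$. Because $\mathcal P_{2\mathrm{a}}$ and $\mathcal P_{2\mathrm{b}}$ share the same feasible set, $\bm\Phi_\alpha^\star$ is also feasible for $\mathcal P_{2\mathrm{a}}$.

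\textbf{Bound.} The lower inequality holds because $\bm\Phi^\star$ is optimal for $\mathcal P_{2\mathrm{a}}$ and $\bm\Phi_\alpha^\star$ is feasible for it. For the upper inequality, optimality of $\bm\Phi_\alpha^\star$ in $\mathcal P_{2\mathrm{b}}$ gives
\[
J_{\mathrm{phys}}(\bm\Phi_\alpha^\star)+\alpha J_{\mathrm{reg}}(\bm\Phi_\alpha^\star)\le J_{\mathrm{phys}}(\bm\Phi^\star)+\alpha J_{\mathrm{reg}}(\bm\Phi^\star).
\]
The conditions in Eq.~\eqref{eq:theorem_weight_conditions} give $\bm W\succeq0$, so $J_{\mathrm{reg}}(\bm\Phi_\alpha^\star)=\tr(\bm\Phi_\alpha^{\star\mathsf T}\bm W\bm\Phi_\alpha^\star)\ge0$. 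Dropping that term yields the upper inequality.

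Existence of the minimizers can be argued as follows. Optimizing over the feedback $\bm\nu=\bm K\bm\chi$, the cost is bounded below and coercive in the input responses because $R_k\succ0$, and the state responses are affine in the input responses. In any case, the theorem assumes the optimal solutions are given.

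\textbf{Riccati structure.} Write $\bm C_\alpha=[\bm c_{-1},\ldots,\bm c_{N-1}]$ by block columns. The constraint $\mathcal Z_{AB}\widehat{\bm\Phi}=\bm C_\alpha$ decouples by columns. For column $j$, the responses $\widehat\Phi^x_{k,j},\widehat\Phi^u_{k,j}$ satisfy
\[
\widehat\Phi^x_{k+1,j}=\widetilde A_k\widehat\Phi^x_{k,j}+\widetilde B_k\widehat\Phi^u_{k,j}+C_{k+1,j},
\]
with the initial block given by $C_{0,j}$. Since $\bm C_\alpha$ is lower triangular, $C_{k,j}=0$ for $k<j$. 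The trace objective $\tr(\widehat{\bm\Phi}^{\mathsf T}\bm W\widehat{\bm\Phi})$ is a sum over columns. Each column summand is a deterministic finite-horizon LQ tracking cost with an exogenous, known additive input sequence $C_{k+1,j}$, matrix-valued with one column per coordinate. Causality constraints become vacuous: the known inputs make openloop optimization equivalent.

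Each column problem is therefore solved by the standard Riccati recursion
\[
P_N=\widetilde Q_{\mathrm f},\qquad P_k=\text{Schur complement of } W_k+[\widetilde A_k\ \widetilde B_k]^{\mathsf T}P_{k+1}[\widetilde A_k\ \widetilde B_k],
\]
which is well defined because $R_k+\widetilde B_k^{\mathsf T}P_{k+1}\widetilde B_k\succ0$ under $R_k\succ0$ and $P_{k+1}\succeq0$. The recursion is augmented by an affine costate term $s_{k,j}$ driven by $C_{k+1,j}$, propagated backward, and the optimal input is affine: $\widehat\Phi^u_{k,j}=K_k\widehat\Phi^x_{k,j}+k_{k,j}$. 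The quadratic part ($P_k$, $K_k$) depends only on $\widetilde A,\widetilde B,W$ and is common to all $N+1$ columns. Only the affine recursions differ, and they are mutually independent, which gives the final claim.

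\textbf{Main obstacle.} The delicate step is justifying that the column-wise LQ solution satisfies the block causal (lower-triangular) structure of $\widehat{\bm\Phi}$, namely $\widehat\Phi^u_{k,j}=0$ for $k<j$, and that dropping $\mathcal C$ loses nothing. I expect to show this by noting that for $k<j$ the future inputs $C_{\cdot,j}$ can be nonzero, so the unconstrained optimum might act early. In that case I would impose the structure by starting column $j$'s recursion at its first nonzero block: fix $\widehat\Phi^x_{k,j}=0$ and $\widehat\Phi^u_{k,j}=0$ for $k<j$, and treat the column problem as starting at time $j$ from the initial condition $C_{j,j}$ with inputs $C_{k+1,j}$ for $k\ge j$. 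The Riccati matrices are unchanged by this truncation, so the common-recursion claim survives.
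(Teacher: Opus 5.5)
Your proposal is correct and follows essentially the paper's route: the Cholesky change of variables giving the equivalence of $\mathcal P_{2\mathrm{b}}$ and $\mathcal P_{2\mathrm{c}}$, the sandwich bound from the two optimality inequalities together with $J_{\mathrm{reg}}\ge0$, and column-wise affine LQ problems sharing one Riccati sweep. Your resolution of the ``main obstacle'' (imposing causality by starting each column at its first nonzero block) is exactly the paper's restriction of the Bellman recursion to $k=r_j,\ldots,N-1$, but fix the index: with columns labeled $j\in\{-1,\ldots,N-1\}$ one has $C_{k,j}=0$ for $k\le j$, so column $j$ starts at $r_j=j+1$ from $\widehat\Phi^x_{j+1,j}=C_{j+1,j}$ with $\widehat\Phi^u_{k,j}=0$ for all $k\le j$, since starting at time $j$ with a free $\widehat\Phi^u_{j,j}$ would take $\widehat{\bm\Phi}$ outside $\mathcal C$ (letting $\nu_j$ respond to $d_j$).
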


\begin{proof}
Since $\bm M_\alpha\succ0$, its Cholesky factor $\bm C_\alpha$ is nonsingular. Both
$\bm C_\alpha$ and $\bm C_\alpha^{-1}$ are block lower triangular. Therefore,
right multiplication by $\bm C_\alpha$ defines a bijection on $\mathcal C$. In addition,
$
    \tr\!\left(
    \bm\Phi^{\mathsf T}\bm W\bm\Phi
    \bm M_\alpha
    \right)
    =
    \tr\!\left(
    \widehat{\bm\Phi}^{\mathsf T}
    \bm W
    \widehat{\bm\Phi}
    \right),
    $
and right multiplication of
$\mathcal Z_{AB}\bm\Phi=I$ by $\bm C_\alpha$ gives
$\mathcal Z_{AB}\widehat{\bm\Phi}=\bm C_\alpha$.
Hence $\mathcal P_{2\mathrm{b}}$ and $\mathcal P_{2\mathrm{c}}$ are equivalent, and the inverse image
$\bm\Phi_\alpha^\star
=
\widehat{\bm\Phi}_\alpha^\star\bm C_\alpha^{-1}$
is an optimal solution of $\mathcal P_{2\mathrm{b}}$.

Problems $\mathcal P_{2\mathrm{a}}$ and $\mathcal P_{2\mathrm{b}}$ have the same feasible set because they share the decision set
$\mathcal C$ and the constraint
$\mathcal Z_{AB}\bm\Phi=I$.
Therefore, $\bm\Phi_\alpha^\star$ is also feasible for $\mathcal P_{2\mathrm{a}}$.
Moreover, Eq.~\eqref{eq:theorem_weight_conditions} implies
$\bm W\succeq0$, and hence
$J_{\mathrm{reg}}(\bm\Phi)\ge0$.
Optimality of $\bm\Phi^\star$ for $\mathcal P_{2\mathrm{a}}$ gives
\begin{equation}
    J_{\mathrm{phys}}(\bm\Phi^\star)
    \le
    J_{\mathrm{phys}}(\bm\Phi_\alpha^\star),
    \label{eq:physical_lower_bound}
\end{equation}
whereas optimality of $\bm\Phi_\alpha^\star$ for $\mathcal P_{2\mathrm{b}}$ gives
\begin{equation}
    J_{\mathrm{phys}}(\bm\Phi_\alpha^\star)
    +
    \alpha J_{\mathrm{reg}}(\bm\Phi_\alpha^\star)
    \le
    J_{\mathrm{phys}}(\bm\Phi^\star)
    +
    \alpha J_{\mathrm{reg}}(\bm\Phi^\star).
    \label{eq:regularized_optimality}
\end{equation}
Combining Eqs.~\eqref{eq:physical_lower_bound} and
\eqref{eq:regularized_optimality} yields Eq.~\eqref{eq:regularization_bound}.

It remains to derive the solution structure of $\mathcal P_{2\mathrm{c}}$. Let $\mathcal J=\{-1,0,\ldots,N-1\}$ , $r_j=j+1$ and let
$C_{k,j}\in\mathbb R^{n_\chi\times n_\chi}$ denote the $(k,j)$ block of
$\bm C_\alpha$, and let
$\widehat\Phi^x_{k,j}\in\mathbb R^{n_\chi\times n_\chi}$ and
$\widehat\Phi^u_{k,j}\in\mathbb R^{n_\nu\times n_\chi}$
denote the corresponding transformed state- and input-response blocks, respectively.
Because $\bm C_\alpha$ is block lower triangular,
$C_{k,j}=0$ for $k<r_j$. For each $j\in\mathcal J$, the $j$th block column of the constraint in
$\mathcal P_{2\mathrm{c}}$ satisfies
\begin{equation}
    \widehat\Phi^x_{r_j,j}
    =
    C_{r_j,j},
    \qquad
    \widehat\Phi^x_{k+1,j}
    =
    \widetilde A_k\widehat\Phi^x_{k,j}
    +
    \widetilde B_k\widehat\Phi^u_{k,j}
    +
    C_{k+1,j},
    \quad
    k=r_j,\ldots,N-1.
    \label{eq:column_affine_dynamics}
\end{equation}
If \(r_j=N\), the recursion over \(k=r_j,\ldots,N-1\) is vacuous and column \(j\) contributes only a terminal cost. More generally, the objective of \(\mathcal P_{2\mathrm c}\) can be written as the sum of the quadratic costs associated with its block columns, while the transformed closed-loop response dynamics in Eq.~\eqref{eq:column_affine_dynamics} are likewise separable by block column. Hence, for each \(j\in\mathcal J\), the corresponding transformed disturbance column defines a finite-horizon affine linear-quadratic (LQ) problem. All columns share the same quadratic stage and terminal data, whereas the affine forcing terms \(C_{k,j}\) are column dependent.

The standard finite-horizon LQ dynamic programming argument \cite{athans1966optimal,kwakernaak1972linear} extends directly to these response blocks because both the trace cost and the column dynamics are additive over the columns of the response matrices. The cost-to-go is therefore introduced to separate the common quadratic contribution from the column-dependent affine contribution. For each \(j\in\mathcal J\), the cost-to-go function can be written as
\begin{equation}
 \mathcal V_{k+1,j}\!\left(\widehat\Phi^x_{k+1,j}\right)
 =
 \tr\!\left[
 (\widehat\Phi^x_{k+1,j})^{\mathsf T}
 \mathcal P_{k+1}
 \widehat\Phi^x_{k+1,j}
 \right]
 +2\tr\!\left[
 \Pi_{k+1,j}^{\mathsf T}
 \widehat\Phi^x_{k+1,j}
 \right]
 +s_{k+1,j},
 \label{eq:value_function}
\end{equation}
where
$\mathcal P_{k+1}\in\mathbb R^{n_\chi\times n_\chi}$,
$\Pi_{k+1,j}\in\mathbb R^{n_\chi\times n_\chi}$,
and $s_{k+1,j}\in\mathbb R$ is independent of
$\widehat\Phi^x_{k+1,j}$.
The Bellman recursion for each column is
\begin{equation}
\mathcal V_{k,j}(\widehat\Phi^x_{k,j})
=\min_{\widehat\Phi^u_{k,j}}\left\{
\operatorname{tr}\!\left(
\begin{bmatrix}
\widehat\Phi^x_{k,j}\\
\widehat\Phi^u_{k,j}
\end{bmatrix}^{\mathsf T}
W_k
\begin{bmatrix}
\widehat\Phi^x_{k,j}\\
\widehat\Phi^u_{k,j}
\end{bmatrix}
\right)
+\mathcal V_{k+1,j}\!\left(
\widetilde A_k\widehat\Phi^x_{k,j}
+\widetilde B_k\widehat\Phi^u_{k,j}
+C_{k+1,j}
\right)
\right\}.
\label{eq:bellman_recursion}
\end{equation}

Substituting the quadratic form in Eq.~\eqref{eq:value_function}
into the Bellman recursion in Eq.~\eqref{eq:bellman_recursion}
and carrying out the minimization with respect to
$\widehat\Phi^u_{k,j}$ separates the recursion into a
column-independent quadratic part and column-dependent affine
terms. The former yields the common Riccati recursion
\begin{subequations}\label{eq:riccati_defs}
\begin{align}
    \mathcal P_N&=\widetilde Q_{\mathrm f},\qquad
    \mathcal H_k=R_k+\widetilde B_k^{\mathsf T}
    \mathcal P_{k+1}\widetilde B_k,\qquad
    \mathcal G_k=S_k^{\mathsf T}+\widetilde B_k^{\mathsf T}
    \mathcal P_{k+1}\widetilde A_k,\\
    \mathcal L_k&=-\mathcal H_k^{-1}\mathcal G_k,\qquad
    \mathcal P_k=Q_k+\widetilde A_k^{\mathsf T}
    \mathcal P_{k+1}\widetilde A_k
    -\mathcal G_k^{\mathsf T}\mathcal H_k^{-1}\mathcal G_k,
    \quad k=N-1,\ldots,0.
\end{align}
\end{subequations}
Here
$\mathcal H_k\in\mathbb R^{n_\nu\times n_\nu}$ and
$\mathcal G_k,\mathcal L_k\in\mathbb R^{n_\nu\times n_\chi}$.
The conditions in Eq.~\eqref{eq:theorem_weight_conditions} imply
$\mathcal P_k\succeq0$ and $\mathcal H_k\succ0$ throughout the backward recursion.

Since the terminal cost contains no term linear in \(\widehat\Phi^x_{N,j}\), matching the cost-to-go function to the terminal cost gives \(\Pi_{N,j}=0\). With the common Riccati quantities determined, the column-dependent affine terms are obtained by matching the linear terms in the Bellman recursion. For each \(j\in\mathcal J\) with \(r_j\le N-1\), define \(\Gamma_{k,j}\), \(\Pi_{k,j}\in\mathbb R^{n_\chi\times n_\chi}\) and \(\Lambda_{k,j}\in\mathbb R^{n_\nu\times n_\chi}\) by
\begin{equation}\label{eq:affine_terms}
    \Gamma_{k,j}
    =
    \mathcal P_{k+1}C_{k+1,j}
    +
    \Pi_{k+1,j},\quad
    \Lambda_{k,j}
    =
    -\mathcal H_k^{-1}
    \widetilde B_k^{\mathsf T}
    \Gamma_{k,j},\quad
    \Pi_{k,j}
    =
    \widetilde A_k^{\mathsf T}
    \Gamma_{k,j}
    +
    \mathcal G_k^{\mathsf T}
    \Lambda_{k,j},
    \quad
    k=N-1,\ldots,r_j,
\end{equation}
The minimizing transformed control-response block is
\begin{equation}
    \widehat\Phi^u_{k,j}
    =
    \mathcal L_k
    \widehat\Phi^x_{k,j}
    +
    \Lambda_{k,j},
    \qquad
    k=r_j,\ldots,N-1,
    \label{eq:response_control}
\end{equation}
and
$\widehat\Phi^x_{k,j}$
is propagated forward by Eq.~\eqref{eq:column_affine_dynamics} over the same index range. 

Equation~\eqref{eq:riccati_defs} is independent of $j$ and therefore requires one backward Riccati sweep. Once
$\mathcal P_k$ and $\mathcal L_k$ are available,
Eqs.~\eqref{eq:affine_terms}--\eqref{eq:response_control}
are independent across the transformed disturbance columns.
\end{proof}

Thus, by Theorem~\ref{thm:structured}, $\mathcal P_{2\mathrm{c}}$
can be solved efficiently using one common backward Riccati recursion
and $N+1$ mutually independent affine recursions. Moreover, by choosing
$\alpha$ sufficiently small, the recovered response provides a feasible
solution to the original feedback synthesis problem with an arbitrarily
small suboptimality gap.

The decomposition above also reduces to the standard Riccati state-feedback law, which depends only on the current augmented state, when the persistent uncertainty component is removed.

\begin{corollary}[Reduction under Independent Disturbances]\label{cor:independent}
If $\bm G_{b}=0$, the optimal regularized response reduces to the current augmented-state feedback law
\begin{equation}
    \nu_k
    =
    \mathcal L_k\chi_k,
    \qquad
    k=0,\ldots,N-1.
    \label{eq:current_aug_feedback}
\end{equation}
\end{corollary}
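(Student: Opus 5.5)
When $\bm G_b=0$ we have $\bm M_{\mathrm{phys}}=\bm D_\eta\bm D_\eta^{\mathsf T}$, which is block diagonal. Hence $\bm M_\alpha=\bm D_\eta\bm D_\eta^{\mathsf T}+\alpha I$ is block diagonal, and its Cholesky factor $\bm C_\alpha$ is block diagonal too. So $C_{k,j}=0$ for every $k\neq r_j$, while the diagonal blocks $C_{r_j,j}$ are nonsingular. The plan is to feed this sparsity into the column recursions from the proof of Theorem~\ref{thm:structured}, show that all affine terms vanish, and then map the transformed responses back to $\bm\Phi$ and $\bm K$.

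\textbf{Step 1: the affine terms vanish.} In Eq.~\eqref{eq:affine_terms}, $\Pi_{N,j}=0$. For $k\ge r_j$ we have $k+1>r_j$, so $C_{k+1,j}=0$ and therefore $\Gamma_{k,j}=\Pi_{k+1,j}$. A backward induction from $k=N-1$ down to $k=r_j$ then gives $\Gamma_{k,j}=0$, $\Lambda_{k,j}=0$ and $\Pi_{k,j}=0$.

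\textbf{Step 2: the transformed responses are pure Riccati feedback.} By Eq.~\eqref{eq:response_control}, $\widehat\Phi^u_{k,j}=\mathcal L_k\widehat\Phi^x_{k,j}$. By Eq.~\eqref{eq:column_affine_dynamics}, $\widehat\Phi^x_{k+1,j}=(\widetilde A_k+\widetilde B_k\mathcal L_k)\widehat\Phi^x_{k,j}$, starting from $\widehat\Phi^x_{r_j,j}=C_{r_j,j}$.

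\textbf{Step 3: transform back.} Since $\bm C_\alpha^{-1}$ is block diagonal, $\Phi_{k,j}=\widehat\Phi_{k,j}C_{r_j,j}^{-1}$ blockwise. This gives $\Phi^x_{r_j,j}=I$, $\Phi^x_{k+1,j}=(\widetilde A_k+\widetilde B_k\mathcal L_k)\Phi^x_{k,j}$ and $\Phi^u_{k,j}=\mathcal L_k\Phi^x_{k,j}$. In stacked form this reads $\bm\Phi^u=\bm{\mathcal L}\bm\Phi^x$, where $\bm{\mathcal L}=\operatorname{blkdiag}(\mathcal L_0,\ldots,\mathcal L_{N-1})$ padded with a zero block for node $N$.

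\textbf{Step 4: read off the feedback law.} Eq.~\eqref{eq:feedback_response_relation} gives $\bm K=\bm\Phi^u(\bm\Phi^x)^{-1}=\bm{\mathcal L}$, which is block diagonal. Hence $\nu_k=\mathcal L_k\chi_k$. As a cross-check, substitute into Eq.~\eqref{eq:response_sum}: $\nu_k=\mathcal L_k\sum_j\Phi^x_{k,j}d_j=\mathcal L_k\chi_k$.

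\textbf{Main obstacle.} The delicate point is Step~3. It must be checked that the normalization by $C_{r_j,j}^{-1}$ is legitimate and commutes with the linear recursion. This holds because the recursion is linear and homogeneous once the forcing vanishes, and because right multiplication acts column-blockwise when $\bm C_\alpha$ is block diagonal.

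A second point to handle is the block structure. The initial block is $L_{-1}L_{-1}^{\mathsf T}+\alpha I$ and each later block is $\widetilde D_kL_{\eta,k}L_{\eta,k}^{\mathsf T}\widetilde D_k^{\mathsf T}+\alpha I$. Both are positive definite, so the Cholesky factor stays block diagonal with nonsingular diagonal blocks. Uniqueness of the minimizer, from $\mathcal H_k\succ0$, ensures that the response obtained this way is the optimal regularized one.
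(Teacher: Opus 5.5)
Your proposal is correct and takes the same route as the paper. Block-diagonality of $\bm C_\alpha$ removes the forcing $C_{k+1,j}$ for $k\ge r_j$, so $\Pi_{k,j}=\Lambda_{k,j}=0$; this gives $\bm\Phi^u=\bm{\mathcal L}\bm\Phi^x$, and hence $\bm K=\bm{\mathcal L}$ by Eq.~\eqref{eq:feedback_response_relation}. You spell out the backward induction and the back-transformation that the paper leaves implicit, though the latter is not really an obstacle: right-multiplying $\widehat{\bm\Phi}^u=\bm{\mathcal L}\widehat{\bm\Phi}^x$ by $\bm C_\alpha^{-1}$ preserves the relation for any invertible $\bm C_\alpha$, block diagonal or not.
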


\begin{proof}
When $\bm G_{b}=0$,
$\bm M_\alpha
=
\bm D_{\eta}\bm D_{\eta}^{\mathsf T}
+
\alpha I$
is block diagonal, and so is
$\bm C_\alpha$.
Hence
$C_{k,j}=0$ for $k>r_j$, which gives
$\Pi_{k,j}=\Lambda_{k,j}=0$
over every active column range.
Therefore,
$\bm\Phi^u=\bm L\bm\Phi^x$, where
$\bm L
=
\left[\operatorname{blkdiag}
(\mathcal L_0,\ldots,\mathcal L_{N-1})
\quad 0_{Nn_\nu\times n_\chi}\right]$.
Eq.~\eqref{eq:feedback_response_relation} then gives
$\bm K=\bm L$, yielding
Eq.~\eqref{eq:current_aug_feedback}.
\end{proof}

Persistent parametric uncertainty changes this limit through the cross-time blocks of
$\bm G_{b}\bm G_{b}^{\mathsf T}$.
In this case
$\bm C_\alpha$
is generally not block diagonal, so
$C_{k,j}$ can remain nonzero after the arrival of column $j$.
The resulting
$\Pi_{k,j}$ and
$\Lambda_{k,j}$
are generally nonzero, and the recovered responses need not satisfy
$\bm\Phi^u=\bm L\bm\Phi^x$.
By Eq.~\eqref{eq:feedback_response_relation}, they nevertheless admit a causal feedback operator
$\bm K$ satisfying
$\bm K\bm\Phi^x=\bm\Phi^u$, which may contain strictly historical blocks:
\begin{equation}
    \nu_k
    =
    \sum_{\ell=0}^{k}
    K_{k,\ell}\chi_\ell.
    \label{eq:history_feedback}
\end{equation}
The synthesis uses the cross-time statistical structure of the persistent uncertainty without estimating the realization of \(\xi_{b}\), thereby avoiding dependence on high-accuracy online identification of the residual propulsion uncertainty.

\subsection{Computational Complexity and Parallel Structure}\label{sec:parallel}

In the present ascent application,
$n_\nu\ll n_\chi$, while the dimensions of the persistent and independent
uncertainty coordinates and the number of scalar path constraints per node
are fixed. Accordingly, only the dominant dependence on horizon length $N$ and augmented-state dimension $n_\chi$
is retained below. Table~\ref{tab:complexity} summarizes the principal
feedback computations. 

The common Riccati recursion in
Eq.~\eqref{eq:riccati_defs} requires $O(Nn_\chi^3)$ work. 
The column-dependent affine backward recursions and forward response
propagations in Eqs.~\eqref{eq:column_affine_dynamics},
\eqref{eq:affine_terms}, and \eqref{eq:response_control} require
$O(N^2n_\chi^3)$ serial work. Once the common Riccati matrices and
$\bm C_\alpha$ are available, different transformed columns are independent,
giving an ideal parallel depth of $O(Nn_\chi^3)$. The structured construction of the Cholesky factorization  exploits the low-rank persistent contribution $\bm G_{b}\bm G_{b}^{\mathsf T}$ together with the block-diagonal independent
component. Its serial work is
$O(Nn_\chi^3+N^2n_\chi^2)$. The system responses $\bm\Phi$ are then recovered from
Eq.~\eqref{eq:canonical_transform} by right block-triangular
substitution without explicitly forming $\bm C_\alpha^{-1}$. Recovering all causal
response blocks requires $O(N^3n_\chi^3)$ serial work. Different output
block rows can be processed independently, reducing the ideal parallel
depth to $O(N^2n_\chi^3)$. The recovered responses are used in Eq.~\eqref{eq:physical_response_factors}
to construct the corresponding $\Psi$ matrices.
Equation~\eqref{eq:physical_covariance} then provides the covariance used
to evaluate the chance-constraint backoff in Eq.~\eqref{eq:chance_backoff}. These operations require
$O(N^2n_\chi^2)$ serial work and admit parallel evaluation across nodes
and constraint rows. 

\begin{table}[t]
\caption{Computational complexity of the principal feedback synthesis operations.}
\label{tab:complexity}
\centering
\begin{tabularx}{\linewidth}{@{}
>{\centering\arraybackslash}X
>{\centering\arraybackslash}p{0.18\linewidth}
>{\centering\arraybackslash}p{0.18\linewidth}@{}}
\toprule
Operation
& Single-thread work
& Ideal parallel depth \\
\midrule

Common Riccati sweep,
Eq.~\eqref{eq:riccati_defs}
& $O(Nn_\chi^3)$
& $O(Nn_\chi^3)$\\

Transformed response recursion,
Eqs.~\eqref{eq:column_affine_dynamics},
\eqref{eq:affine_terms} and
\eqref{eq:response_control}
& $O(N^2n_\chi^3)$
& $O(Nn_\chi^3)$ \\

Structured Cholesky factorization $\bm M_\alpha=\bm C_\alpha\bm C_\alpha^{\mathsf T}$
& $O(Nn_\chi^3+N^2n_\chi^2)$
& $O(n_\chi^3+N^2n_\chi^2)$\\

Recovery of $\bm\Phi_\alpha^\star=\widehat{\bm\Phi}_\alpha^\star\bm C_\alpha^{-1}$
& $O(N^3n_\chi^3)$
& $O(N^2n_\chi^3)$\\

Physical covariance and backoff evaluation,
Eqs.~\eqref{eq:chance_backoff}, \eqref{eq:physical_response_factors}
and \eqref{eq:physical_covariance}
& $O(N^2n_\chi^2)$
& $O(Nn_\chi^2)$ \\

\bottomrule
\end{tabularx}
\end{table}

\section{Task-Oriented Stochastic Replanning}\label{sec:task_replanning}

This section completes the proposed joint stochastic trajectory--guidance replanning framework. Sec.~\ref{sec:metric} first introduces a post-injection correction metric for quantifying terminal orbital errors. Sec.~\ref{sec:terminal_weight} then derives a task-oriented terminal weight from this metric, which, together with the system level synthesis in Sec.~\ref{sec:sls}, completes the feedback synthesis. Sec.~\ref{sec:alternating} alternates this feedback synthesis with the nominal replanning problem in Sec.~\ref{sec:problem}, forming the complete stochastic trajectory--guidance replanning procedure. Finally, Sec.~\ref{sec:online_feedback} describes the online realization of the optimized feedback responses using disturbance innovations and energy-based control interpolation.

\subsection{Post-Injection Correction Metric}\label{sec:metric}

\begin{figure}[htbp]
\centering
\includegraphics[width=0.35\columnwidth]{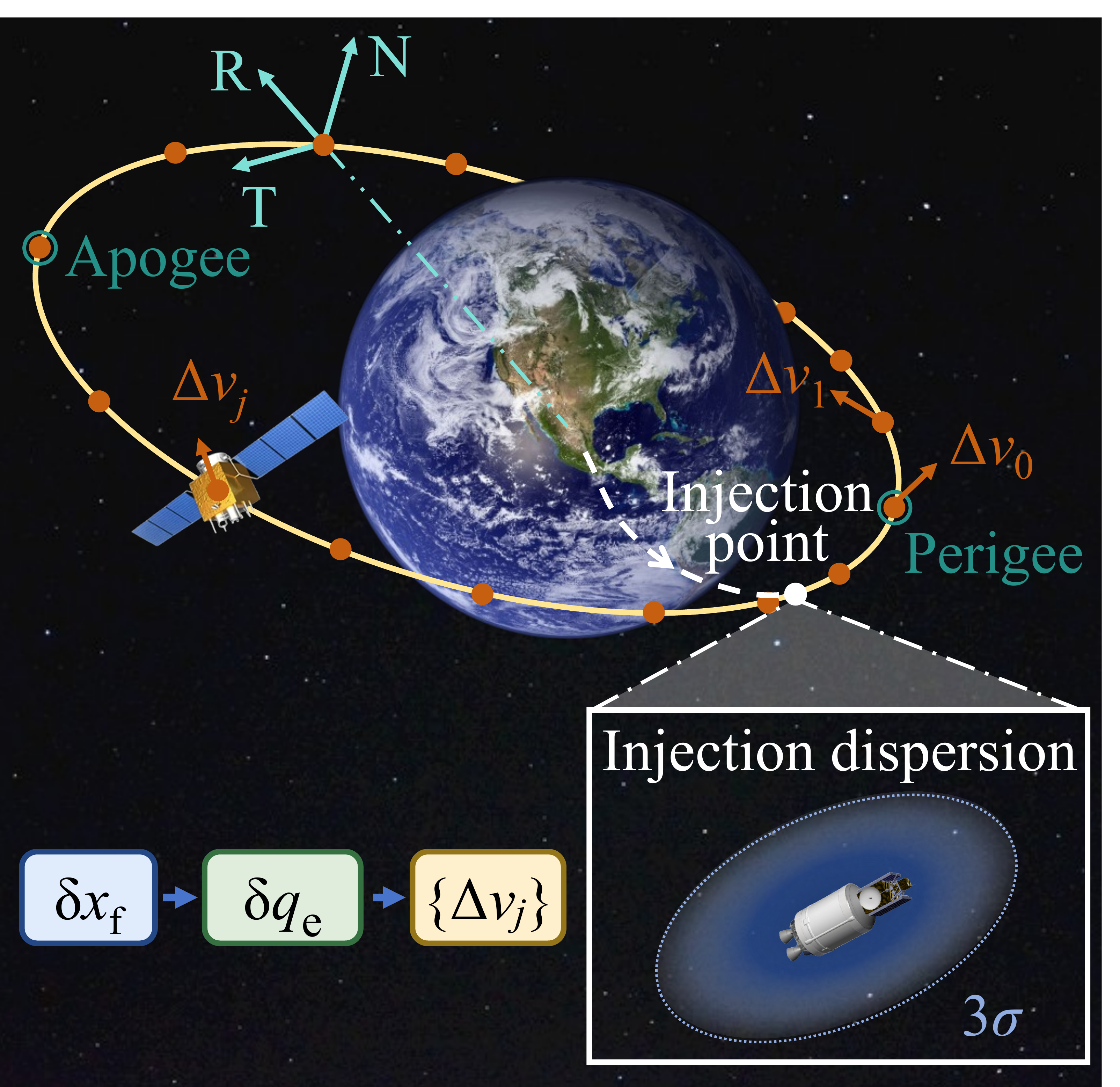}
\caption{Construction of the post-injection correction metric.}
\label{fig:mee_schematic}
\end{figure}

For a mission that does not prescribe the injection phase along the target orbit, use the phase-free modified equinoctial element (MEE) vector
$
  q_{\mathrm{e}}=
  \left[
  p_{\mathrm{e}}\ \ 
  f_{\mathrm{e}}\ \ 
  g_{\mathrm{e}}\ \ 
  h_{\mathrm{e}}\ \ 
  k_{\mathrm{e}}
  \right]^{\mathsf T}.
$
Here $p_{\mathrm{e}}$ is the semi-latus rectum, $f_{\mathrm{e}}$ and $g_{\mathrm{e}}$ are the eccentricity-vector components, and $h_{\mathrm{e}}$ and $k_{\mathrm{e}}$ describe the orbital plane. If the injection phase is prescribed, true longitude can be included as an additional terminal coordinate. Linearization about the nominal injection state gives
$
 \delta q_{\mathrm{e}}
 =G_{q_{\mathrm{e}}}\delta x_{\mathrm{f}},
$
where $G_{q_{\mathrm{e}}}$ is the Cartesian-to-MEE Jacobian and its mass column is zero.

To quantify the post-injection correction effort associated with $\delta q_{\mathrm{e}}$, consider a set of candidate correction locations with true longitudes $L_j$, $j=1,\ldots,N_L$.
At each candidate correction location, a local radial-transverse-normal (RTN) frame is defined, where the $R$ direction is radial, the $T$ direction is transverse in the orbital plane, and the $N$ direction is normal to the orbital plane, as illustrated in Fig.~\ref{fig:mee_schematic}. The phase-free Gauss variational mapping from the RTN acceleration $a_{\mathrm{RTN}}=\col(a_{\mathrm{R}},a_{\mathrm{T}},a_{\mathrm{N}})$ to the MEE rate is
\begin{equation}
 \dot q_{\mathrm{e}}
 =B_{q_{\mathrm{e}}}(q_{\mathrm{e}},L)a_{\mathrm{RTN}},
 \label{eq:gauss_mee}
\end{equation}
where
\begin{equation}
 B_{q_{\mathrm{e}}}(q_{\mathrm{e}},L)
 =\sqrt{\frac{p_{\mathrm{e}}}{\mu}}
 \begin{bmatrix}
  0&2p_{\mathrm{e}}/w_L&0\\
  \sin L&((w_L+1)\cos L+f_{\mathrm{e}})/w_L&-g_{\mathrm{e}}\chi_L/w_L\\
  -\cos L&((w_L+1)\sin L+g_{\mathrm{e}})/w_L&f_{\mathrm{e}}\chi_L/w_L\\
  0&0&s_L^2\cos L/(2w_L)\\
  0&0&s_L^2\sin L/(2w_L)
 \end{bmatrix}.
 \label{eq:gauss_mee_matrix}
\end{equation}
and $w_L=1+f_{\mathrm{e}}\cos L+g_{\mathrm{e}}\sin L$, $s_L^2=1+h_{\mathrm{e}}^2+k_{\mathrm{e}}^2$, $\chi_L=h_{\mathrm{e}}\sin L-k_{\mathrm{e}}\cos L$.
A small impulse $\Delta v_j\in\R^3$ at $L_j$ produces
$B_{q_{\mathrm{e}}}(q_{\mathrm{e}}^\star,L_j)\Delta v_j$. Stacking the candidate correction impulses gives
$
  H_{\mathrm{orb}}
  =
  \left[
  B_{q_{\mathrm{e}}}(q_{\mathrm{e}}^\star,L_1)
  \quad\cdots\quad
  B_{q_{\mathrm{e}}}(q_{\mathrm{e}}^\star,L_{N_L})
  \right]
$ and 
$\Delta V=\col(\Delta v_1,\ldots,\Delta v_{N_L}).$
Thus the equivalent post-injection correction velocity associated with a terminal orbital error is defined as the minimum total impulse magnitude required to eliminate the corresponding orbital deviation:
\begin{subequations}\label{prob:minimum_impulse}
\begin{align}
 \mathcal P_{3\mathrm{a}}:\qquad
 J_{\mathrm{corr},1}^\star
 =\min_{\{\Delta v_j\}}\quad&
 \sum_{j=1}^{N_L}\|\Delta v_j\|_2\\
 \mathrm{s.t.}\quad&
 H_{\mathrm{orb}}\Delta V
 =-G_{q_{\mathrm{e}}}\delta x_{\mathrm{f}}.
\end{align}
\end{subequations}
Because $J_{\mathrm{corr},1}^\star$ has units of velocity, it provides an equivalent $\Delta v$ measure of injection error. Physically, it represents the minimum post-injection velocity increment required, within the selected correction locations and the local orbital model, to remove the terminal orbital deviation.

The number and longitudes of the candidate correction locations can be selected according to mission requirements. Since $H_{\mathrm{orb}}\in\mathbb R^{5\times 3N_L}$, at least two candidate locations are required for full row rank. However, if their true longitudes differ by $\pi$, as for perigee and apogee, the columns associated with the $N$-direction impulses become linearly dependent. In this case, at least one additional non-antipodal candidate location is required.

\subsection{Task-Oriented Terminal Weight}\label{sec:terminal_weight}

The preceding subsection provides a physical metric for evaluating
injection accuracy. The feedback synthesis additionally requires a
quadratic terminal weight, which is obtained from the least-squares
counterpart of that correction problem:
\begin{subequations}\label{prob:minimum_squared_impulse}
\begin{align}
 \mathcal P_{3\mathrm{b}}:\qquad
 J_{\mathrm{corr},2}^\star
 =\min_{\{\Delta v_j\}}\quad&
 \sum_{j=1}^{N_L}\|\Delta v_j\|_2^2\\
 \mathrm{s.t.}\quad&
 H_{\mathrm{orb}}\Delta V
 =-G_{q_{\mathrm{e}}}\delta x_{\mathrm{f}}.
\end{align}
\end{subequations}

\begin{theorem}[Orbit-Correction Terminal Weight]\label{thm:task_weight}
Assume $H_{\mathrm{orb}}$ has full row rank and define
\begin{equation}
  Q_{\mathrm{f},x}^{\mathrm{orb}}
  =G_{q_{\mathrm{e}}}^{\mathsf T}
  (H_{\mathrm{orb}}H_{\mathrm{orb}}^{\mathsf T})^{-1}
  G_{q_{\mathrm{e}}}.
  \label{eq:orbital_terminal_weight}
\end{equation}
Then every sufficiently small terminal perturbation $\delta x_{\mathrm{f}}$ satisfies
\begin{equation}
 \delta x_{\mathrm{f}}^{\mathsf T}
 Q_{\mathrm{f},x}^{\mathrm{orb}}\delta x_{\mathrm{f}}
 =J_{\mathrm{corr},2}^\star.
 \label{eq:terminal_energy_identity}
\end{equation}
\end{theorem}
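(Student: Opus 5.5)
The identity should follow from the closed-form solution of the minimum-norm least-squares problem $\mathcal P_{3\mathrm b}$. The plan is to solve $\mathcal P_{3\mathrm b}$ explicitly and substitute the minimizer back into the objective.

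\textbf{Step 1: reformulate.} Set $y=-G_{q_{\mathrm e}}\delta x_{\mathrm f}\in\mathbb R^5$. Since $\sum_j\|\Delta v_j\|_2^2=\|\Delta V\|_2^2$, problem $\mathcal P_{3\mathrm b}$ becomes
\[
\min_{\Delta V}\|\Delta V\|_2^2 \quad\text{s.t.}\quad H_{\mathrm{orb}}\Delta V=y.
\]
Full row rank of $H_{\mathrm{orb}}\in\mathbb R^{5\times3N_L}$ means $H_{\mathrm{orb}}H_{\mathrm{orb}}^{\mathsf T}\succ0$ is invertible. It also means the constraint is consistent for every $y$, so the problem is feasible for every $\delta x_{\mathrm f}$.

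\textbf{Step 2: identify the minimizer.} The candidate is $\Delta V^\star=H_{\mathrm{orb}}^{\mathsf T}(H_{\mathrm{orb}}H_{\mathrm{orb}}^{\mathsf T})^{-1}y$, which clearly satisfies the constraint. For optimality, take any other feasible $\Delta V$ and write $\Delta V=\Delta V^\star+e$ with $H_{\mathrm{orb}}e=0$. Then
\[
(\Delta V^\star)^{\mathsf T}e=y^{\mathsf T}(H_{\mathrm{orb}}H_{\mathrm{orb}}^{\mathsf T})^{-1}H_{\mathrm{orb}}e=0,
\]
so $\|\Delta V\|^2=\|\Delta V^\star\|^2+\|e\|^2$ by Pythagoras. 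Hence $\Delta V^\star$ is the unique minimizer. A Lagrange-multiplier (KKT) argument on the strictly convex problem gives the same result and could be used instead.

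\textbf{Step 3: evaluate the optimal cost.} Substituting the minimizer gives
\[
J_{\mathrm{corr},2}^\star=y^{\mathsf T}(H_{\mathrm{orb}}H_{\mathrm{orb}}^{\mathsf T})^{-1}H_{\mathrm{orb}}H_{\mathrm{orb}}^{\mathsf T}(H_{\mathrm{orb}}H_{\mathrm{orb}}^{\mathsf T})^{-1}y=y^{\mathsf T}(H_{\mathrm{orb}}H_{\mathrm{orb}}^{\mathsf T})^{-1}y.
\]
Inserting $y=-G_{q_{\mathrm e}}\delta x_{\mathrm f}$, the two sign changes cancel and we obtain $\delta x_{\mathrm f}^{\mathsf T}Q_{\mathrm f,x}^{\mathrm{orb}}\delta x_{\mathrm f}$, which is Eq.~\eqref{eq:terminal_energy_identity}.

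\textbf{Main subtlety: the phrase ``sufficiently small''.} The algebra above is exact once $\mathcal P_{3\mathrm b}$ is taken with the linearized quantities $G_{q_{\mathrm e}}$ and $H_{\mathrm{orb}}$ evaluated at the nominal injection state, as the problem is stated. ``Sufficiently small'' therefore only marks the range of validity of the first-order models $\delta q_{\mathrm e}=G_{q_{\mathrm e}}\delta x_{\mathrm f}$ and the impulsive Gauss mapping; it imposes no extra condition in the proof. The proof should say this explicitly so the identity is not misread as a statement about the nonlinear correction problem. It should also note that $Q_{\mathrm f,x}^{\mathrm{orb}}\succeq0$, with a zero mass row and column, which makes it admissible as the terminal weight $Q_{\mathrm f}$ under the conditions of Theorem~\ref{thm:structured}.
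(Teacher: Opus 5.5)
Your proposal is correct and follows essentially the same route as the paper, which derives the same minimizer $\Delta V^\star=-H_{\mathrm{orb}}^{\mathsf T}(H_{\mathrm{orb}}H_{\mathrm{orb}}^{\mathsf T})^{-1}G_{q_{\mathrm e}}\delta x_{\mathrm f}$ from Lagrangian stationarity and then substitutes it into the objective. The only difference is your Pythagorean decomposition $\Delta V=\Delta V^\star+e$ with $H_{\mathrm{orb}}e=0$, which certifies global optimality and uniqueness directly, whereas the paper's stationarity argument leaves sufficiency implicit in the convexity of $\mathcal P_{3\mathrm b}$.
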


\begin{proof}
The Lagrangian of $\mathcal P_{3\mathrm{b}}$ is
$\mathcal L=\Delta V^{\mathsf T}\Delta V
+2\lambda^{\mathsf T}(H_{\mathrm{orb}}\Delta V
+G_{q_{\mathrm{e}}}\delta x_{\mathrm{f}})$.
Stationarity with respect to $\Delta V$ gives
$\partial\mathcal L/\partial\Delta V
=2\Delta V+2H_{\mathrm{orb}}^{\mathsf T}\lambda=0$,
so $\Delta V=-H_{\mathrm{orb}}^{\mathsf T}\lambda$.
Substitution into the equality constraint yields
$-H_{\mathrm{orb}}H_{\mathrm{orb}}^{\mathsf T}\lambda
=-G_{q_{\mathrm{e}}}\delta x_{\mathrm{f}}$.
Because $H_{\mathrm{orb}}$ has full row rank,
$H_{\mathrm{orb}}H_{\mathrm{orb}}^{\mathsf T}$ is nonsingular, giving
\begin{equation}
   \lambda^\star=(H_{\mathrm{orb}}H_{\mathrm{orb}}^{\mathsf T})^{-1}
    G_{q_{\mathrm{e}}}\delta x_{\mathrm{f}},\qquad
    \Delta V^\star=-H_{\mathrm{orb}}^{\mathsf T}
    (H_{\mathrm{orb}}H_{\mathrm{orb}}^{\mathsf T})^{-1}
    G_{q_{\mathrm{e}}}\delta x_{\mathrm{f}}. 
\end{equation}
Therefore,
$J_{\mathrm{corr},2}^\star
=(\Delta V^\star)^{\mathsf T}\Delta V^\star
=\delta x_{\mathrm{f}}^{\mathsf T}
G_{q_{\mathrm{e}}}^{\mathsf T}
(H_{\mathrm{orb}}H_{\mathrm{orb}}^{\mathsf T})^{-1}
G_{q_{\mathrm{e}}}\delta x_{\mathrm{f}}
=\delta x_{\mathrm{f}}^{\mathsf T}
Q_{\mathrm{f},x}^{\mathrm{orb}}\delta x_{\mathrm{f}}$,
which proves Eq.~\eqref{eq:terminal_energy_identity}.
\end{proof}

For a zero-mean terminal error with covariance $P_{\mathrm{f}}$,
Eq.~\eqref{eq:terminal_energy_identity} gives
\begin{equation}
\E[J_{\mathrm{corr},2}^\star]
= \E[\delta x_{\mathrm{f}}^{\mathsf T}
 Q_{\mathrm{f},x}^{\mathrm{orb}}\delta x_{\mathrm{f}}]
=\tr(Q_{\mathrm{f},x}^{\mathrm{orb}}P_{\mathrm{f}}).
\label{eq:expected_orbit_energy}
\end{equation}
Hence the quadratic terminal penalty associated with
$Q_{\mathrm{f},x}^{\mathrm{orb}}$ has the direct interpretation of
expected quadratic post-injection correction energy. By choosing a
positive scaling factor $\lambda_{\mathrm{f}}$ to set the overall
terminal penalty magnitude, the terminal matrix in Eq.~\eqref{eq:continuous_cost} is therefore chosen as
\begin{equation}
Q_{\mathrm{f}}
=\lambda_{\mathrm{f}}Q_{\mathrm{f},x}^{\mathrm{orb}},
\qquad
\delta x_{\mathrm{f}}^{\mathsf T}Q_{\mathrm{f}}\delta x_{\mathrm{f}}
=\lambda_{\mathrm{f}}J_{\mathrm{corr},2}^\star,
\qquad
\lambda_{\mathrm{f}}>0.
\label{eq:weighted_terminal}
\end{equation}

\subsection{Alternating Stochastic Replanning}\label{sec:alternating}

\begin{figure}[t]
\centering
\includegraphics[width=0.8\columnwidth]{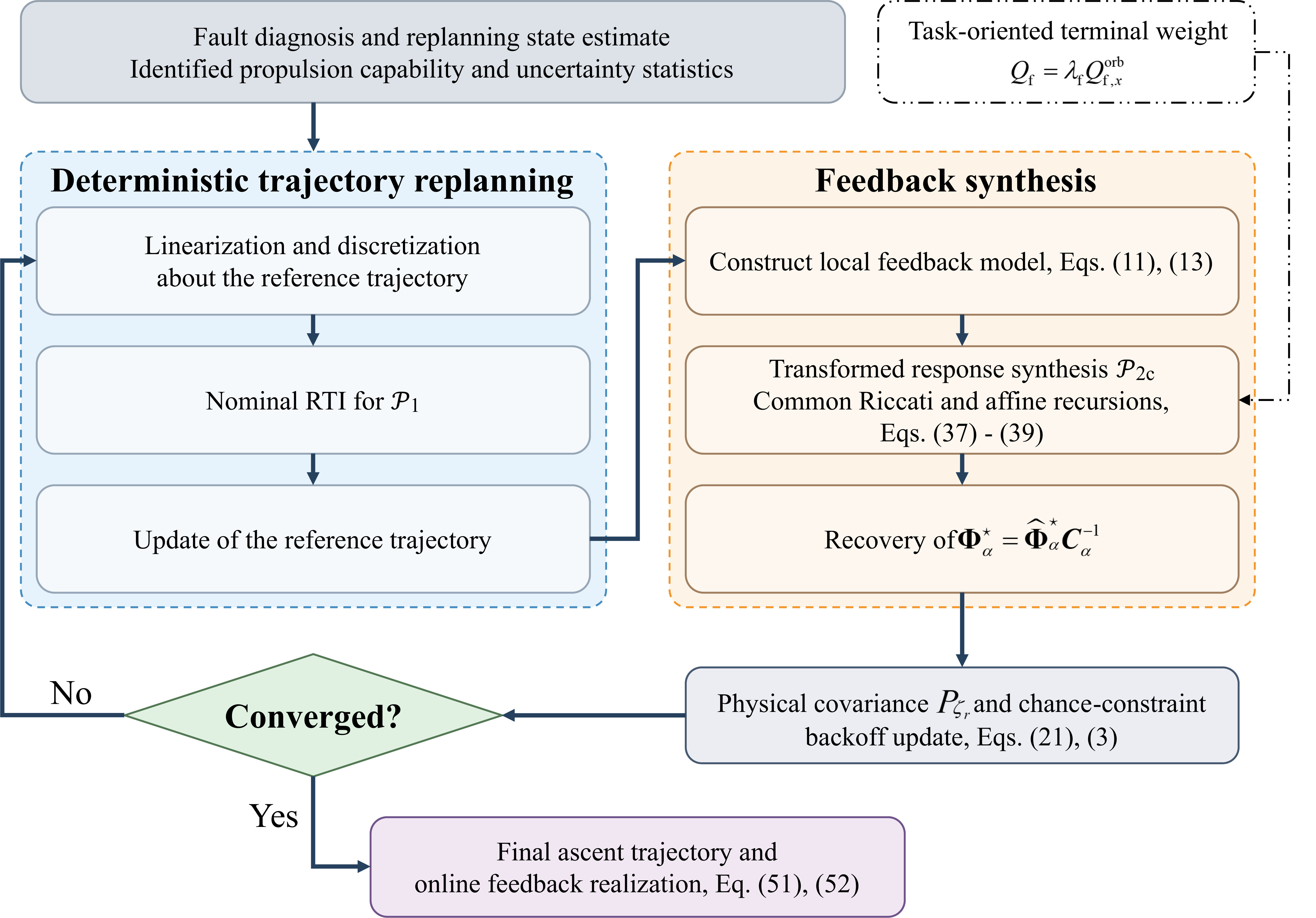}
\caption{Alternating stochastic ascent replanning procedure.}
\label{fig:architecture}
\end{figure}

With the task-oriented terminal weight from Sec.~\ref{sec:terminal_weight}, the feedback synthesis in Sec.~\ref{sec:sls} is coupled with the nominal replanning problem in Sec.~\ref{sec:problem} through the chance-constraint backoffs, forming the alternating stochastic replanning procedure shown in Fig.~\ref{fig:architecture}. At each outer iteration, the current backoffs are first imposed on the nominal problem $\mathcal P_1$, and the reference trajectory is updated by one real-time iteration (RTI), defined here as solving a single warm-started convex SCP subproblem rather than iterating to full deterministic convergence. Based on the updated reference trajectory, the local perturbation model, feedback cost, and covariance structure are reconstructed, and the task-oriented terminal weight is incorporated into the transformed feedback problem $\mathcal P_{2\mathrm{c}}$. The latter is then solved using the recursions in Sec.~\ref{sec:sls}, after which the physical system responses are recovered according to Theorem~\ref{thm:structured}. These responses are used to evaluate the constraint covariances $P_{\zeta_r}$ and update the chance-constraint backoffs through Eq.~\eqref{eq:chance_backoff}. If the prescribed convergence criteria are not satisfied, the updated backoffs are returned to $\mathcal P_1$ for the next outer iteration. Otherwise, the resulting reference trajectory and system responses are retained for online feedback realization in Sec.~\ref{sec:online_feedback}. The stochastic replanning procedure is therefore solved only once after fault diagnosis, while the subsequent ascent uses the resulting feedback responses without repeated online replanning. In this manner, the nominal trajectory and feedback policy are successively adapted to each other without solving a single large-scale coupled optimization problem.

The alternating iteration terminates when both the scaled change in the nominal trajectory and the normalized change in the chance-constraint backoffs satisfy their prescribed tolerances, or when the maximum number of outer iterations $N_{\mathrm{max}}$ is reached. Related local convergence and stationary-point analyses for alternating backoff updates are given in Ref.~\cite{Messerer2026RiccatiZORO}.

\subsection{Online Feedback Realization}\label{sec:online_feedback}

Although the causal feedback matrix can be recovered from the optimized system responses through Eq.~\eqref{eq:feedback_response_relation}, this requires the additional inversion of $\bm\Phi^x$ and therefore introduces unnecessary computation during online execution. We instead use the algebraically equivalent innovation realization
\begin{equation}
    \nu_k
    =\sum_{j=-1}^{k-1}\Phi^u_{k,j}\widehat d_j,
    \label{eq:innovation_control}
\end{equation}
where $\widehat d_{-1}=\chi_0$ and, after $\chi_{j+1}$ is measured, the disturbance innovation is reconstructed as $\widehat d_j=\chi_{j+1}-\widetilde A_j\chi_j-\widetilde B_j\nu_j$. Thus, the optimized system responses can be used directly without explicitly recovering the feedback gains. Since $\nu_k=\delta u_{k+1}-\delta u_k$, the corresponding corrected nodal control angles satisfy $u_k^{\mathrm c}=\bar u_k+\delta u_k$ and $u_{k+1}^{\mathrm c}=\bar u_{k+1}+\delta u_k+\nu_k$. Note that $u$ denotes the thrust-direction control angles rather than the thrust magnitude, so this implementation does not require high-accuracy online identification of the actual thrust level.

The corrected nodal commands are scheduled online according to the specific mechanical energy introduced in Sec.~\ref{sec:foh_feedback}, which can be evaluated directly from the navigation solution.  Over interval $k$, let $\bar\varepsilon_k(\sigma)=\varepsilon(\bar x(t_k+\sigma))$, $\sigma\in[0,\tau_k]$, denote the monotonic nominal energy profile. Its inverse defines the interpolation coordinate $s_k^\varepsilon(x)=\bar\varepsilon_k^{-1}(\varepsilon(x))/\tau_k$, and the commanded control angles are
\begin{equation}
    u^{\mathrm c}(x)
    =(1-s_k^\varepsilon(x))u_k^{\mathrm c}
    +s_k^\varepsilon(x)u_{k+1}^{\mathrm c}.
    \label{eq:energy_control_interpolation}
\end{equation}
Along the nominal trajectory, this mapping satisfies $s_k^\varepsilon(\bar x(t))=(t-t_k)/\tau_k$ and therefore recovers the nominal time-based FOH control schedule, while under perturbations its timing adapts to the measured energy progress. In the numerical implementation, the inverse energy--time mapping is tabulated along each nominal interval.

\section{Numerical Results}\label{sec:numerical}

The numerical study consists of three groups of experiments. First, a representative post-fault case is used to examine the closed-loop behavior and the accuracy of the linear covariance prediction. The same case is then used for terminal-weight comparison, correction metric validation, and uncertainty source analysis. Second, the proposed method is compared with classical ascent guidance methods under representative fault conditions to examine terminal accuracy and constraint satisfaction. Finally, uncertainty sweeps and a broader fault grid are used to evaluate the sensitivity, recovery capability, and computational performance of the complete stochastic replanning method.

\begin{table}[!htbp]
\caption{Common numerical parameters.}
\label{tab:setup}
\centering\footnotesize
\setlength{\tabcolsep}{2pt}
\renewcommand{\arraystretch}{1.55}
\begin{tabular}{
@{}
>{\raggedright\arraybackslash}p{0.05\textwidth}
>{\centering\arraybackslash}p{0.13\textwidth}
>{\raggedright\arraybackslash}p{0.05\textwidth}
>{\centering\arraybackslash}p{0.13\textwidth}
>{\raggedright\arraybackslash}p{0.05\textwidth}
>{\centering\arraybackslash}p{0.13\textwidth}
>{\raggedright\arraybackslash}p{0.06\textwidth}
>{\centering\arraybackslash}p{0.14\textwidth}
@{}}
\toprule
Parameter & Value & Parameter & Value & Parameter & Value & Parameter & Value \\
\midrule
$m_{\min,1}$ & $1.5\times10^5\ \mathrm{kg}$
& $\bar C_{\mathrm D}^{(1,2)}$ & $0.30$
& $3\sigma_{\eta,\mathrm T}^{(1)}$ & $0.60\%$
& $\varrho_{\eta,T\dot m}^{(1,2)}$ & $1$ \\

$m_{\min,2}$ & $1.6\times10^4\ \mathrm{kg}$
& $q_{\mathrm d,\max}$ & $80\ \mathrm{kPa}$
& $3\sigma_{\eta,\dot m}^{(1)}$ & $0.60\%$
& $n_\chi,\ n_\nu$ & $10,\ 2$ \\

$m_{\mathrm{prop},1}$ & $3.9\times10^5\ \mathrm{kg}$
& $q_{\mathrm h,\max}$ & $50\ \mathrm{kW/m^2}$
& $3\sigma_{b,\mathrm T}^{(2)}$ & $0.60\%$
& $n_{b}$ & $4$ \\

$m_{\mathrm{prop},2}$ & $1.1\times10^5\ \mathrm{kg}$
& $\dot u_{\max}$ & $1.0\ \mathrm{deg/s}$
& $3\sigma_{b,\dot m}^{(2)}$ & $0.60\%$
& $\lambda_{\mathrm f}$ & $1.0\times10^4$ \\

$T_{1,\mathrm{vac}}^{\mathrm{nom}}$ & $7.9\ \mathrm{MN}$
& $N$ & $38$
& $3\sigma_{\eta,\mathrm T}^{(2)}$ & $0.30\%$
& $\alpha$ & $1.0\times10^{-6}s_{\rm reg}$ \\

$T_{2,\mathrm{vac}}^{\mathrm{nom}}$ & $0.98\ \mathrm{MN}$
& $3\sigma_{r,0}$ & $60\ \mathrm{m}$
& $3\sigma_{\eta,\dot m}^{(2)}$ & $0.30\%$
& $N_L$ & $4$ \\

$I_{\mathrm{sp},1}^{\mathrm{nom}}$ & $3.0\times10^2\ \mathrm{s}$
& $3\sigma_{v,0}$ & $1.5\ \mathrm{m/s}$
& $3\sigma_{b,C_{\mathrm D}}^{(1,2)}$ & $30\%$
& $N_{\max}$ & $12$ \\

$I_{\mathrm{sp},2}^{\mathrm{nom}}$ & $3.5\times10^2\ \mathrm{s}$
& $3\sigma_{m,0}$ & $60\ \mathrm{kg}$
& $3\sigma_{\eta,C_{\mathrm D}}^{(1,2)}$ & $0$
& $N_{\mathrm{MC}}$ & $1000$ \\

$A_{\mathrm e,1}$ & $5.9\ \mathrm{m^2}$
& $\pi_r$ & $5.0\times10^{-3}$
& $3\sigma_{b,\rho}^{(1,2)}$ & $24\%$
& $R_{\mathrm n}$ & $1.0\ \mathrm{m}$ \\

$A_{\mathrm e,2}$ & $0\ \mathrm{m^2}$
& $3\sigma_{b,\mathrm T}^{(1)}$ & $2.1\%$
& $3\sigma_{\eta,\rho}^{(1,2)}$ & $9.0\%$
& $Q_{\mathrm c}$ & $\diag(10^{-3}I_3,10^2I_3,0)$ \\

$S$ & $11\ \mathrm{m^2}$
& $3\sigma_{b,\dot m}^{(1)}$ & $2.1\%$
& $\varrho_{b,T\dot m}^{(1,2)}$ & $1$
& $R_{\mathrm c}$ & $2.3\times10^2 I_2$ \\
$L_{\mathrm{imp}}$ & $4.7\times10^2\ \mathrm{km}$ & $W_{\mathrm{imp}}$ & $26\ \mathrm{km}$ & & & & \\
\bottomrule
\end{tabular}
\end{table}

The common vehicle, environmental, uncertainty, and algorithm parameters are summarized in Table~\ref{tab:setup}. Standard terrestrial constants such as $\mu$, $R_{\mathrm E}$, ${\omega}_{\mathrm E}$, and $g_0$ take their conventional values and are not repeated in the table. The launch vehicle model is based on the Falcon~9 Full Thrust configuration and consists of two powered phases, with propulsion faults considered only during the first stage. Let $m_{\min,i}$ and $m_{\mathrm{prop},i}$ denote the minimum mass and propellant mass associated with phase $i$, respectively, and let $T_{i,\mathrm{vac}}^{\mathrm{nom}}$ and $I_{\mathrm{sp},i}^{\mathrm{nom}}$ denote the corresponding pre-fault nominal vacuum thrust and specific impulse. The mission targets an elliptical orbit with perigee and apogee altitudes of $300$ and $500~\mathrm{km}$, respectively. The initial-state covariance is parameterized as $P_0=\operatorname{diag}(\sigma_{r,0}^2 I_3,\sigma_{v,0}^2 I_3,\sigma_{m,0}^2)$. Here, $N_L$ is the number of candidate post-injection correction locations and $N_{\mathrm{MC}}$ is the number of nonlinear Monte Carlo (MC) realizations used for each reported statistical result, unless otherwise specified. Four candidate locations are used to construct the post-injection correction metric and the associated terminal weight, with their true longitudes uniformly distributed over one orbital revolution starting from the reference perigee. The regularization is set to $\alpha=10^{-6}s_{\mathrm{reg}}$, where $s_{\mathrm{reg}}$ is defined as the median of the positive eigenvalues of the diagonal blocks of $\bm M_{\mathrm{phys}}$. The impact region is defined as a downrange-aligned rectangle of length $L_{\mathrm{imp}}$ and width $W_{\mathrm{imp}}$. Throughout Sec. V, $\Delta v$ denotes the equivalent post-injection correction velocity $J_{\mathrm{corr},1}^{*}$, evaluated for each nonlinear Monte Carlo realization using the metric defined in Sec. IV.A and P95 denotes the 95th percentile over the Monte Carlo samples.

The implementation is primarily in Python, and the convex subproblems are solved using Clarabel. All timing tests are performed on a desktop with a CPU clocked at up to 5.7 GHz and 64 GB RAM using a single CPU thread. Selected performance-critical loops are implemented as compiled C kernels to reduce Python overhead.

\subsection{Terminal Weight Selection and Correction Metric Validation}\label{sec:terminal_weight_selection}

A representative fault occurring at $t=30~\mathrm{s}$ is considered first, with a $5\%$ loss of thrust and a $5\%$ loss of mean specific impulse. Figure~\ref{fig:representative_trajectory} shows the representative closed-loop trajectories obtained with the proposed task-oriented terminal weight. The terminal Monte Carlo
dispersion agrees well with the corresponding linear covariance
prediction and is largely contained within the predicted $3\sigma$
bounds. The relative Frobenius difference between the empirical and
predicted terminal state covariances is $3.67\%$, further
supporting the accuracy of the linear covariance approximation at
terminal injection.

\begin{figure}[!htbp]
\centering
\includegraphics[width=\linewidth]{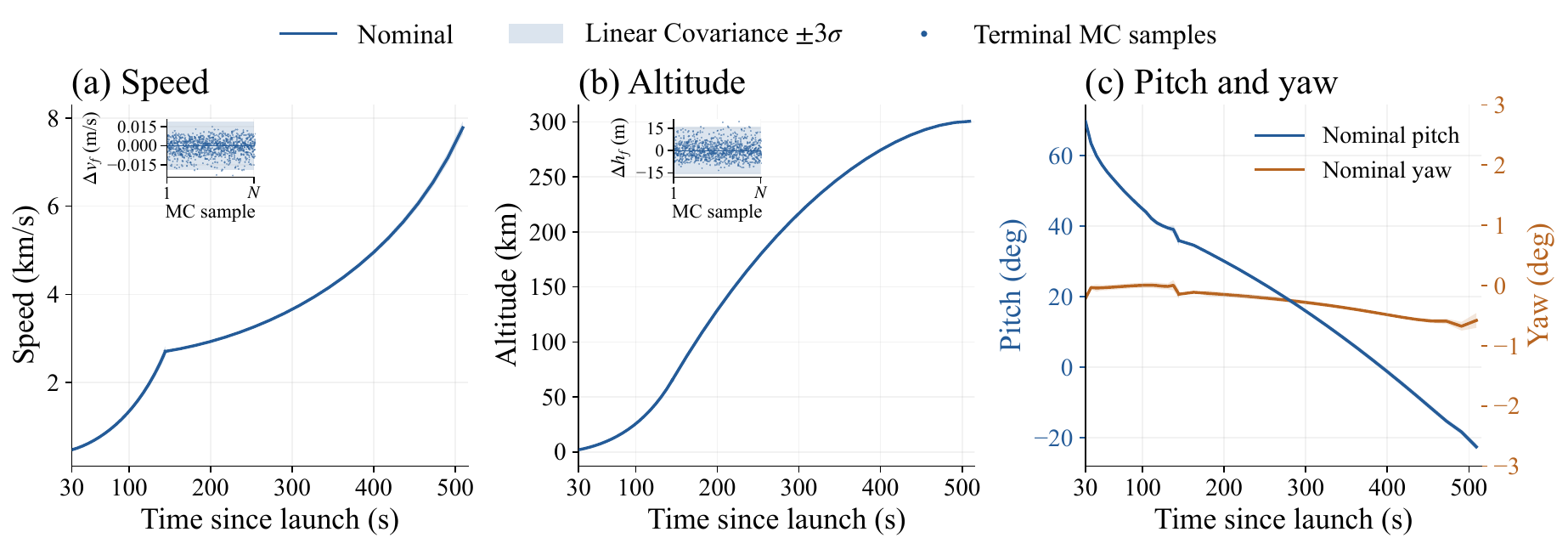}
\caption{Representative closed-loop trajectories and terminal dispersion.}
\label{fig:representative_trajectory}
\end{figure}

To assess the benefit of the task-oriented terminal weight, a trace-matched diagonal terminal weight is introduced for comparison. Specifically, the trace-matched diagonal terminal weight is chosen as $\operatorname{diag}(\gamma I_6,0)$, where $\gamma$ is selected such that its trace equals that of the proposed task-oriented weight. The nominal process weights are shown in Table~\ref{tab:setup} and are additionally varied by a common positive factor $c$, with $c=1$ corresponding to the nominal process weight configuration. The results are summarized in Table~\ref{tab:joint_weights}. The task-oriented design converges for all tested values of $c$ and yields mean equivalent post-injection correction velocities of only $0.00367$--$0.135~\mathrm{m/s}$. In contrast, the trace-matched diagonal weight gives substantially larger values of $0.754$--$1.94~\mathrm{m/s}$ for the accepted cases and fails to converge for several process weight settings. The diagonal weight penalizes terminal dispersion in directions that have little influence on the required post-injection correction and can therefore induce unnecessarily aggressive feedback action. This, in turn, can increase the covariance associated with intermediate path and control quantities, leading to excessive chance constraint tightening and eventually rendering the subsequent nominal replanning problem infeasible. Thus, matching only the overall magnitude of the terminal weight does not reproduce the performance obtained by shaping the terminal covariance according to the post-injection correction task.

\begin{table}[!htbp]
\caption{Comparison of task-oriented and trace-matched diagonal
terminal weights under process weight scaling.}
\label{tab:joint_weights}
\centering\footnotesize
\setlength{\tabcolsep}{4pt}
\begin{tabular}{rrrrrrr}
\toprule
$c$ 
& \shortstack[c]{Task outer\\iters.}
& \shortstack[c]{Task mean $\Delta v$\\(m/s)}
& \shortstack[c]{Task P95 $\Delta v$\\(m/s)}
& \shortstack[c]{Diag. outer\\iters.}
& \shortstack[c]{Diag. mean $\Delta v$\\(m/s)}
& \shortstack[c]{Diag. P95 $\Delta v$\\(m/s)} \\
\midrule
0.01 & 5 & 0.00370 & 0.00800 & Not converged & -- & -- \\
0.1 & 5 & 0.00367 & 0.00807 & Not converged & -- & -- \\
1 & 5 & 0.00381 & 0.00818 & Not converged & -- & -- \\
10 & 5 & 0.00377 & 0.00801 & Not converged & -- & -- \\
100 & 5 & 0.00543 & 0.0105 & Not converged & -- & -- \\
\num{1e+03} & 5 & 0.0241 & 0.0550 & 9 & 0.754 & 1.81 \\
\num{1e+04} & 5 & 0.135 & 0.311 & 6 & 1.94 & 4.50 \\
\bottomrule
\end{tabular}
\end{table}

To examine the contributions of different uncertainty sources to the terminal dispersion, an open-loop case is introduced in which the control sequence obtained from deterministic replanning is executed according to its planned time schedule without feedback. Because the physical uncertainty source groups are mutually independent, the terminal covariance can be decomposed as
$
P_{\mathrm f}=\sum_s P_{\mathrm f,s},
$
where $s$ indexes the uncertainty source groups listed in Table~\ref{tab:sources}. According to Eq.~\eqref{eq:expected_orbit_energy}, the expected quadratic post-injection correction energy is $\tr(Q_{\mathrm{f},x}^{\mathrm{orb}}P_{\mathrm f})$. The covariance decomposition therefore gives
$
\tr(Q_{\mathrm{f},x}^{\mathrm{orb}}P_{\mathrm f})
=
\sum_s \tr(Q_{\mathrm{f},x}^{\mathrm{orb}}P_{\mathrm f,s}),
$
which allows the contribution of each uncertainty source to the correction energy to be evaluated separately. As shown in Table~\ref{tab:sources}, the first-stage and upper-stage persistent propulsion biases account for $57.9\%$ and $40.8\%$ of the open-loop correction energy, respectively, and together contribute approximately $98.7\%$. This dominance shows that persistent propulsion biases are a major source of terminal injection dispersion and motivates their explicit treatment in the feedback synthesis. With the proposed method, their contributions are almost completely suppressed, and the remaining correction energy is dominated by the upper-stage interval-wise independent propulsion uncertainty, which accounts for approximately $99.9\%$. Since this uncertainty continues to enter near the end of powered flight, progressively less time remains for feedback to reduce its contribution to the terminal dispersion.

\begin{table}[!htbp]
\caption{Contributions of individual uncertainty sources to the expected quadratic post-injection correction energy.}
\label{tab:sources}
\centering\footnotesize
\begin{tabular}{lrrrr}
\toprule
\multicolumn{1}{c}{\multirow{2}{*}{Source}} & \multicolumn{2}{c}{Open loop} & \multicolumn{2}{c}{Proposed} \\
\cmidrule(lr){2-3}\cmidrule(lr){4-5}
& $\tr(Q_{\mathrm{f},x}^{\mathrm{orb}}P_{\mathrm f,s})$ & Share (\%) & $\tr(Q_{\mathrm{f},x}^{\mathrm{orb}}P_{\mathrm f,s})$ & Share (\%) \\
\midrule
First-stage propulsion bias & \num{799} & \num{57.9} & \num{1.24e-10} & \num{0.00201} \\
Upper-stage propulsion bias & \num{563} & \num{40.8} & \num{4.13e-09} & \num{0.0673} \\
First-stage independent & \num{3.84} & \num{0.279} & \num{2.54e-12} & \num{4.15e-05} \\
Upper-stage independent & \num{7.06} & \num{0.512} & \num{6.13e-06} & \num{99.9} \\
Density bias & \num{2.51} & \num{0.182} & \num{2.88e-14} & \num{4.70e-07} \\
Density independent & \num{0.0296} & \num{0.00215} & \num{1.86e-13} & \num{3.03e-06} \\
Drag-coefficient bias & \num{3.92} & \num{0.284} & \num{4.51e-14} & \num{7.35e-07} \\
Initial state & \num{0.356} & \num{0.0258} & \num{2.55e-12} & \num{4.16e-05} \\
\midrule
Total & \num{1.38e+03} & 100 & \num{6.13e-06} & 100 \\
\bottomrule
\multicolumn{5}{l}{\scriptsize Absolute contributions are reported in $\mathrm{m^2/s^2}$.}\\
\end{tabular}
\end{table}

The same open-loop Monte Carlo results are further used to validate the proposed post-injection correction metric. In the nonlinear orbital propagation, the correction maneuvers computed from the proposed metric are applied directly without reoptimization, and Table~\ref{tab:correction} reports the mean absolute orbital element errors before and after correction over the Monte Carlo runs. Here, $\Delta a$, $\Delta e$, $\Delta i$, $\Delta\Omega$, and $\Delta\omega$ denote the errors in semimajor axis, eccentricity, inclination, right ascension of the ascending node, and argument of perigee, respectively. The magnitudes of all five mean absolute orbital element errors are reduced by $99.55$--$99.95\%$ after correction. These nonlinear results verify that the proposed metric provides an accurate measure of the velocity correction required to remove the terminal orbital error.

\begin{table}[!htbp]
\caption{Nonlinear validation of the post-injection correction metric.}
\label{tab:correction}
\centering\footnotesize
\begin{tabular}{lrrr}
\toprule
Element & Before MAE & After MAE & Decrease (\%) \\
\midrule
$\Delta a$ (m) & \num{5.48e+04} & \num{213} & 99.6116 \\
$\Delta e$ & \num{0.00581} & \num{2.62e-05} & 99.5492 \\
$\Delta i$ (deg) & \num{0.00758} & \num{4.02e-06} & 99.9469 \\
$\Delta\Omega$ (deg) & \num{0.00626} & \num{4.13e-06} & 99.9340 \\
$\Delta\omega$ (deg) & \num{25.9} & \num{0.0573} & 99.7788 \\
\bottomrule
\multicolumn{4}{l}{\scriptsize MAE denotes the mean absolute error over the open-loop Monte Carlo samples.}\\
\end{tabular}
\end{table}

\subsection{Comparison with Conventional Ascent Guidance}

Two conventional guidance combinations are considered for comparison \cite{Jiang2026AdaptiveNotch}. The first employs first-stage programmed guidance followed by powered explicit guidance (PEG), whereas the second employs the perturbation guidance method (PGM) followed by the iterative guidance mode (IGM). Both combinations use the trajectory obtained from deterministic post-fault replanning as their reference. The programmed guidance and PGM implementations follow the representative formulations in Refs.~\cite{Hawkins2019CHANGO,Zhang2025PerturbationGuidance}, respectively, while PEG and IGM follow the implementations in Ref.~\cite{Song2015Comparison}. For both terminal guidance methods, the original shutdown condition is modified to use the prescribed terminal specific energy, ensuring a consistent comparison of terminal injection accuracy. The proposed method and the two conventional combinations are evaluated using the same vehicle model, uncertainty model, and post-fault initial reference trajectory, so that the comparison primarily reflects their different treatment of feedback and uncertainty. Two representative faults are considered. Fault F01 uses the same fault scenario as in Sec.~\ref{sec:terminal_weight_selection}, with a fault onset at 30 s, a 5\% thrust loss, and a 5\% specific impulse loss, and is used to examine the heat flux constraint. Fault F02 considers a fault onset at 75 s, a 15\% thrust loss, and no specific impulse loss, and is used to examine the impact constraint. The two faults lead to different post-fault trajectory profiles and bring different path constraints close to their active limits, thereby providing complementary tests of terminal accuracy and constraint satisfaction. The dynamic pressure and control-rate limits are sufficiently loose and remain inactive in all cases considered in this section. Each case is evaluated using $3000$ nonlinear Monte Carlo simulations.

\begin{table}[!htbp]
\caption{Post-injection correction metric and empirical constraint violations for the guidance comparison.}
\label{tab:classical}
\centering\footnotesize
\setlength{\tabcolsep}{4pt}
\begin{tabular}{llrrrrrr}
\toprule
Fault & Method & Mean $\Delta v$ (m/s) & P95 $\Delta v$ (m/s) & Max. $\Delta v$ (m/s) & Heat viol. (\%) & Impact viol. (\%) \\
\midrule
F01 & Program + PEG & 0.00336 & 0.00818 & 0.0367 & 60.3 & 0 \\
F01 & PGM + IGM & 0.00445 & 0.0105 & 0.129 & 60.2 & 0 \\
F01 & Proposed & 0.00375 & 0.00822 & 0.0155 & 0.800 & 0 \\
F02 & Program + PEG & 0.00368 & 0.00860 & 0.0517 & 0.100 & 49.9 \\
F02 & PGM + IGM & 0.00474 & 0.0122 & 0.214 & 0.367 & 52.4 \\
F02 & Proposed & 0.00378 & 0.00799 & 0.0141 & 0.0667 & 0.567 \\
\bottomrule
\end{tabular}
\end{table}

Table~\ref{tab:classical} compares the terminal injection performance and empirical constraint violations of the three guidance schemes. The quantity $\Delta v$ reported in the table denotes the equivalent post-injection correction velocity evaluated using the metric introduced in Sec.~\ref{sec:metric}. Under both faults, PEG and the proposed method achieve comparable injection accuracy, with very small values of $\Delta v$, whereas IGM produces somewhat larger terminal errors. A clearer difference appears in constraint satisfaction. Under faults F01 and F02, the conventional guidance combinations exhibit frequent violations of the heat flux and impact location constraints, respectively, whereas violations under the proposed method remain rare. 

\begin{figure}[!htbp]
\centering
\includegraphics[width=\linewidth]{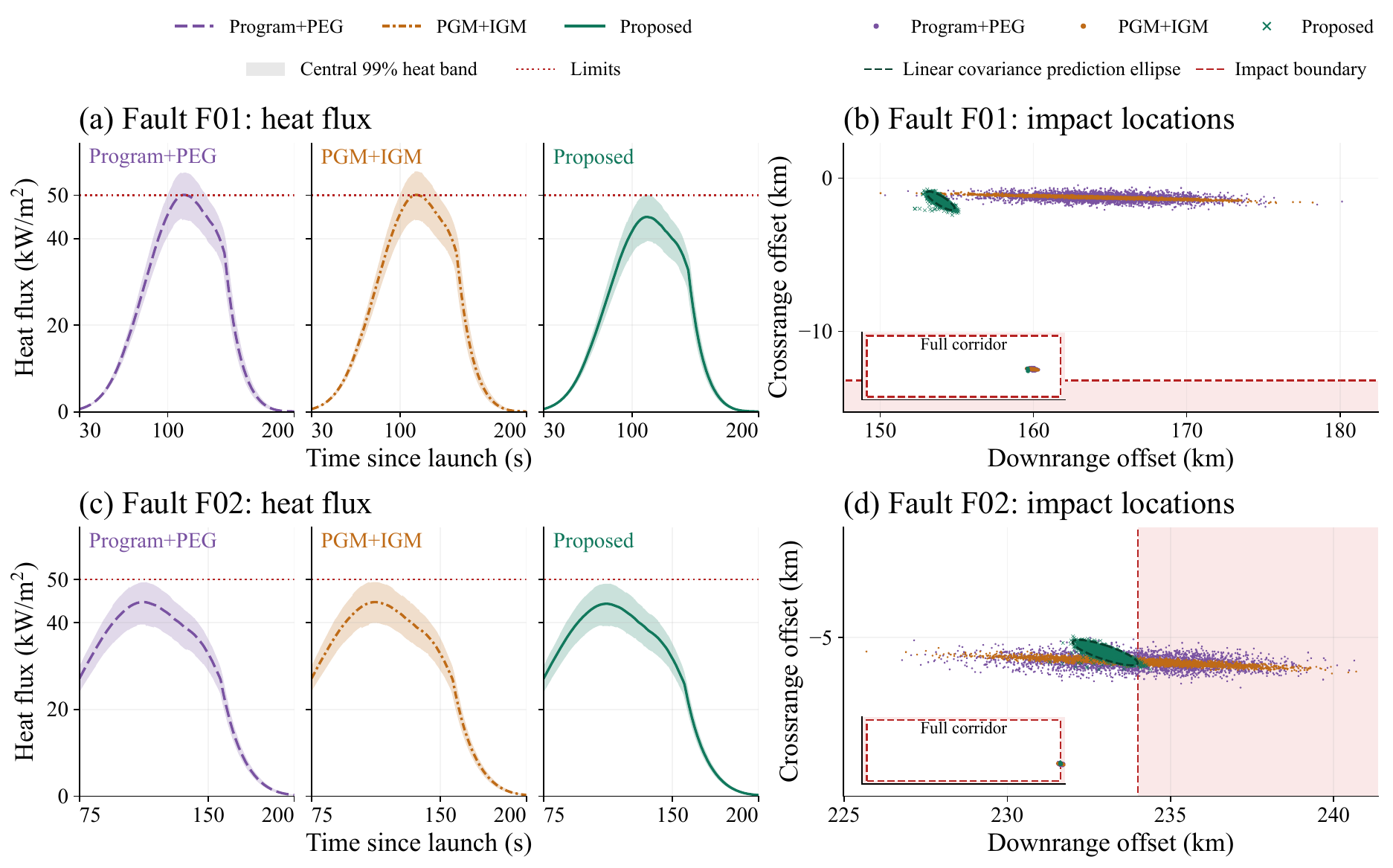}
\caption{Heat flux and impact location constraint responses for faults F01 and F02.}
\label{fig:classical}
\end{figure}

Figure~\ref{fig:classical} illustrates these two representative constraints. The left panels show the heat flux histories for faults F01 and F02, where the red dotted line denotes the heat-flux limit and the shaded bands represent the central $99\%$ Monte Carlo dispersion, consistent with the one-sided $99.5\%$ constraint satisfaction requirement. The right panels show the impact location dispersions for the two faults, together with the prescribed landing corridor. For the proposed method, the plotted covariance ellipse corresponds to the linear covariance prediction associated with the same one-sided $99.5\%$ chance-constraint level. Although the nominal trajectories generated by deterministic replanning satisfy the original limits, the conventional guidance schemes do not explicitly account for the dispersion induced by uncertainty. Consequently, a considerable fraction of the Monte Carlo trajectories cross the corresponding constraint boundaries. In contrast, the proposed stochastic replanning method propagates the uncertainty and introduces chance-constraint tightening during trajectory optimization. The resulting nominal trajectory therefore retains an uncertainty-dependent backoff from the original constraint boundary, substantially reducing the empirical violation rate while preserving terminal injection accuracy. In addition, the proposed feedback also yields a significantly smaller impact-location dispersion than the two conventional guidance schemes: the $1\sigma$ downrange dispersion is reduced by $81.6$--$90.3\%$ relative to Program~+~PEG and by $81.4$--$90.7\%$ relative to PGM~+~IGM.

\subsection{Uncertainty and Fault Envelopes}

\begin{figure}[!htbp]
\centering
\includegraphics[width=\linewidth]{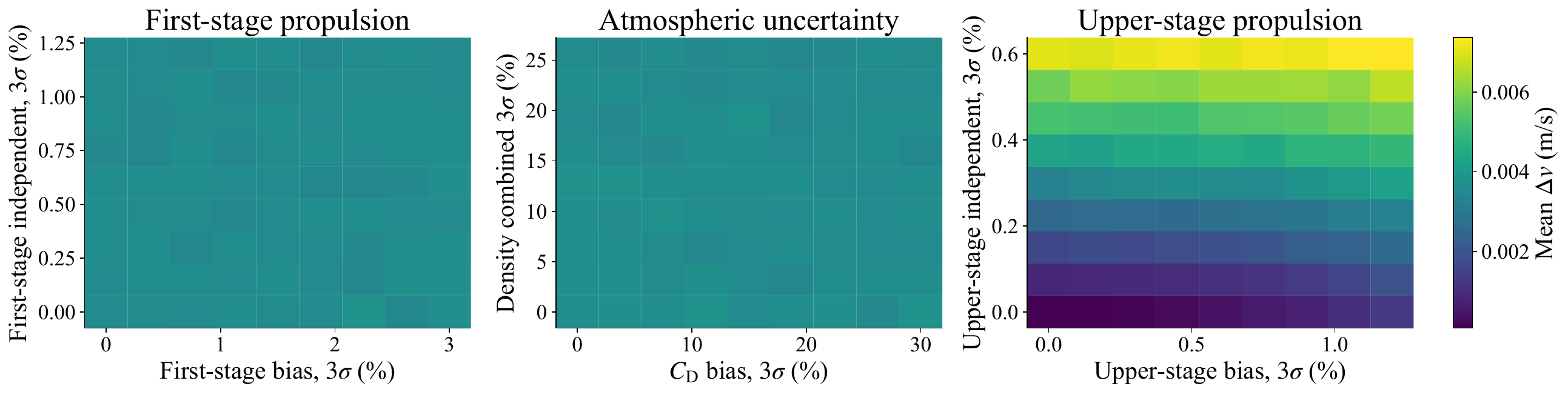}
\caption{Sensitivity of the mean equivalent post-injection correction velocity to uncertainty levels.}
\label{fig:uncertainty_sweep}
\end{figure}

The uncertainty sweeps consider a fault occurring at $45~\mathrm{s}$, with a $10\%$ loss of thrust and no loss of mean specific impulse. The sensitivity to uncertainty levels is first examined by varying two uncertainty components at a time while keeping all other uncertainty parameters at their nominal values in Table~\ref{tab:setup}. Figure~\ref{fig:uncertainty_sweep} shows three two-dimensional scans. The first varies the first-stage persistent and interval-wise independent thrust uncertainties over $3\sigma_{b,T}^{(1)}=0$--$3.0\%$ and $3\sigma_{\eta,T}^{(1)}=0$--$1.2\%$, respectively; the second varies the drag-coefficient and atmospheric-density uncertainties over $0$--$30\%$ and $0$--$25.6\%$; and the third similarly varies the upper-stage persistent and interval-wise independent thrust uncertainties over $0$--$1.2\%$ and $0$--$0.60\%$. The color in each cell represents the mean equivalent post-injection correction velocity $\Delta v$ obtained from the post-injection correction metric under the corresponding uncertainty levels. The first-stage propulsion and aerodynamic uncertainty scans show only minor changes in the resulting \(\Delta v\) over the tested ranges. The upper-stage persistent uncertainty likewise has a limited effect, whereas increasing the upper-stage interval-wise independent thrust uncertainty produces a much more pronounced degradation in injection accuracy. Even in this most sensitive case, however, the overall variation in $\Delta v$ remains limited, and high injection accuracy is maintained throughout the tested uncertainty ranges. This trend is consistent with the uncertainty-source decomposition in Table~\ref{tab:sources}: disturbances entering late in the upper-stage burn leave less remaining time for feedback to reduce their contribution to the terminal dispersion.

\begingroup
\begin{table}[!htbp]
\caption{Fault recovery performance and online replanning results.}\label{tab:faults}
\centering\small
\setlength{\tabcolsep}{3pt}
\begin{tabular}{lrrrrrrrr}
\toprule
Fault & $t_\mathrm{fault}$ (s) & Thrust loss (\%) & $I_{\rm sp}$ loss (\%) & Outer iters. & Replanning time (ms) & Mean $\Delta v$ (m/s) & Viol. (\%) \\
\midrule
F01 & 30 & 5 & 5 & 5 & 412 & 0.00371 & 1.10 \\
F02 & 75 & 15 & 0 & 5 & 286 & 0.00380 & 1.60 \\
F03 & 15 & 10 & 0 & 6 & 433 & 0.00369 & 1.20 \\
F04 & 15 & 20 & 0 & 6 & 389 & 0.00402 & 0.500 \\
F05 & 15 & 30 & 0 & 8 & 607 & 0.00471 & 0.500 \\
F06 & 45 & 10 & 0 & 4 & 270 & 0.00354 & 1.50 \\
F07 & 45 & 20 & 0 & 4 & 279 & 0.00373 & 0.500 \\
F08 & 45 & 30 & 0 & 5 & 402 & 0.00398 & 0.800 \\
F09 & 75 & 40 & 0 & 4 & 270 & 0.00399 & 0.500 \\
F10 & 75 & 40 & 5 & 4 & 262 & 0.00434 & 1.10 \\
F11 & 75 & 40 & 10 & 5 & 329 & 0.00456 & 0.500 \\
F12 & 30 & 20 & 0 & 5 & 289 & 0.00374 & 0.600 \\
F13 & 30 & 20 & 5 & 5 & 298 & 0.00426 & 1.30 \\
F14 & 30 & 20 & 10 & 5 & 300 & 0.00440 & 0.200 \\
\bottomrule
\end{tabular}
\end{table}
\endgroup

A broader fault scan is then performed to evaluate the fault recovery capability and online computational performance of the proposed method. The tested cases cover fault times $t_\mathrm{fault}$ of $15$--$75~\mathrm{s}$, thrust losses of $5$--$40\%$, and specific-impulse losses of $0$--$10\%$. Table~\ref{tab:faults} reports the terminal injection accuracy, empirical constraint violation rate, number of outer iterations, and online replanning time for each fault. A Monte Carlo run is counted as a violation if any of the prescribed path constraints is violated during the flight. For consistency, cases F01 and F02 are reevaluated using \(N_{\mathrm{MC}}=1000\), and their statistics therefore differ slightly from those reported in Sec. V.B. For all cases in which replanning converges, the proposed method maintains high injection accuracy and low empirical violation rates. Most cases require only $4$--$8$ outer iterations and are solved in $262$--$607~\mathrm{ms}$, demonstrating subsecond stochastic trajectory--guidance replanning over a broad range of propulsion faults and indicating strong potential for onboard real-time application. The average time of each outer iteration consists of approximately $21.98~\mathrm{ms}$ for the nominal trajectory update, $33.89~\mathrm{ms}$ for response construction and feedback synthesis, $4.92~\mathrm{ms}$ for covariance propagation and constraint-backoff updates, and $7.21~\mathrm{ms}$ for the remaining nonlinear checks and outer-loop operations.

\FloatBarrier

\section{Conclusions}

This paper developed a fast stochastic ascent replanning method for launch vehicles after nonfatal propulsion faults. The method addresses persistent parametric uncertainty caused by incomplete post-fault propulsion identification. Nominal trajectory replanning and stochastic feedback synthesis are coordinated through covariance propagation and chance-constraint tightening. A fast system level synthesis formulation is used to capture the cross-time effects of persistent parametric uncertainty, while a task-oriented terminal weight is derived from the post-injection correction demand. Innovation-based response realization and energy-based control interpolation are further used for efficient online implementation and reduced sensitivity to burn-time variation. Nonlinear Monte Carlo simulations demonstrate stable convergence, high injection accuracy, low constraint-violation rates, and subsecond solution times over a broad range of fault and uncertainty conditions. The main conclusions are as follows.
\begin{enumerate}[
    label=(\arabic*),
    labelindent=0pt,
    labelwidth=1.4em,
    labelsep=0.4em,
    leftmargin=1.8em,
    itemindent=0pt,
    itemsep=0.3em,
    topsep=0.3em,
    parsep=0pt
]
\item Persistent propulsion biases dominate the open-loop terminal dispersion, contributing 98.7\% of the expected quadratic post-injection correction energy. The proposed feedback effectively suppresses this contribution without high-accuracy online identification.
\item The task-oriented terminal weight improves both convergence behavior and injection accuracy over the trace-matched diagonal design. It converges across all tested process weight settings and achieves a mean equivalent post-injection correction velocity as low as approximately 0.004 m/s, whereas the best converged diagonal design remains above 0.75 m/s.
\item Compared with conventional ascent guidance methods, the proposed framework substantially reduces empirical path-constraint violations from approximately 55\% to 1\% while maintaining high injection accuracy. The solution time ranges from 262 to 607 ms across the successfully replanned fault cases.
\end{enumerate}

\section*{Funding Sources}
This work was supported by the Beijing Municipal Natural Science Foundation (L251014) and the National Natural Science Foundation of China (12525204).

\bibliography{references}

\end{document}